\newtheorem{theorem}{Theorem}
\newtheorem{lemma}[theorem]{Lemma}
\newtheorem{corollary}[theorem]{Corollary}
\newtheorem{proposition}[theorem]{Proposition}
\newtheorem{lettertheorem}{Theorem}
\newtheorem{letterlemma}[lettertheorem]{Lemma}
\theoremstyle{definition}
\newtheorem{example}[theorem]{Example}
\theoremstyle{remark}
\numberwithin{equation}{section}
\newcommand{\D}{\mathbb{D}}
\newcommand{\DD}{\widehat{\mathcal{D}}}
\newcommand{\DDD}{\widecheck{\mathcal{D}}}
\newcommand{\N}{\mathbb{N}}
\newcommand{\Z}{\mathbb{Z}}
\newcommand{\R}{\mathbb{R}}
\newcommand{\HO}{\mathcal{H}}
\newcommand{\C}{\mathbb{C}}
\newcommand{\zjk}{z_{jk}}
\newcommand{\e}{\varepsilon}
\newcommand{\RR}{\mathcal{D}}
\DeclareMathOperator{\Real}{Re}
\newcommand{\T}{\mathbb{T}}
\newcommand{\Th}{\Theta}
\newcommand{\whw}{\widehat{\omega}}
\newcommand{\CZ}{{E'_\Th}}% can be modified freely
\newcommand{\cpt}{{D_\delta}}% can be modified freely
\def\a{\alpha}       \def\b{\beta}        
\def\d{\delta}           \def\e{\varepsilon}
     \def\om{\omega}      
\def\s{\sigma}       \def\t{\theta}
\begin{document}

% \title[short text for running head]{full title}
\title[Characterizations for inner functions]{Characterizations for inner functions in certain function spaces}

\subjclass[2010]{Primary: 30J05; Secondary: 30H10, 30H20 and 30H25}
\keywords{Bergman space, Blaschke product, Frostman shift, Hardy space, inner function, mixed norm space}

\thanks{This research was supported in part by Finnish Cultural Foundation and JSPS Postdoctoral Fellowship for North American and European Researchers.}

%\date{\today}

%%%%%%%%%%%%%%%%%%%%%%%%%%%%%
%%%% ----  ABSTRACT ---- %%%%
%%%%%%%%%%%%%%%%%%%%%%%%%%%%%

\author{Atte Reijonen}
\address{Graduate School of Information Sciences, Tohoku University, Aoba-ku, Sendai 980-8579, Japan}
\email{atte.reijonen@uef.fi}

\author{Toshiyuki Sugawa}
\address{Graduate School of Information Sciences, Tohoku University, Aoba-ku, Sendai 980-8579, Japan}
\email{sugawa@math.is.tohoku.ac.jp}

\maketitle

%%%%%%%%%%%%%%%%%%%%%%%%%%%%%%%%%%%%%%%
%%%% ----  Text of article ----    %%%%
%%%%%%%%%%%%%%%%%%%%%%%%%%%%%%%%%%%%%%%

%%%%%%%%%%%%%%%%%%%%%%%%%%%%%%%%
%%%% ---- INTRODUCTION ---- %%%%
%%%%%%%%%%%%%%%%%%%%%%%%%%%%%%%%

\begin{abstract}
For $\frac12<p<\infty$, $0<q<\infty$ and a certain two-sided doubling weight $\om$, we characterize those inner functions $\Th$ for which
    $$
    \|\Th'\|_{A^{p,q}_\omega}^q=\int_0^1 \left(\int_0^{2\pi} |\Th'(re^{i\t})|^p d\t\right)^{q/p} \om(r)\,dr<\infty.
    $$
Then we show a modified version of this result for $p\ge q$.
Moreover, two additional characterizations for inner functions whose derivative belongs to the Bergman space $A_\om^{p,p}$ are given.
\end{abstract}

\section{Introduction and main results}

Let $\HO(\D)$ be the space of all analytic functions in the open unit disc $\D$ of the complex plane $\C$.
For $0<p<\infty$, the Hardy space $H^p$ consists of those $f\in\HO(\D)$ such that
    \begin{equation*}
    \begin{split}
    M_p(r,f)=\left(\frac{1}{2\pi}\int_{0}^{2\pi} |f(re^{i\t})|^p\,d\t\right)^{1/p}, \quad 0\le r<1,
    \end{split}
    \end{equation*}
is bounded. A function $\om: \D \rightarrow [0,\infty)$ is called a (radial) weight if it is integrable over $\D$ and
$\om(z)=\om(|z|)$ for all $z\in \D$.
For $0<p,q<\infty$ and a weight $\omega$, the weighted mixed norm space $A^{p,q}_\omega$ consists of those $f\in \HO(\D)$ such that
    $$
    \|f\|_{A^{p,q}_\omega}^q=\int_0^1 M_p^q(r,f)\, \om(r)\,dr<\infty.
    $$
For a weight $\om$, we set
    \begin{equation*}
    \widehat{\om}(z)=\widehat{\om}{(|z|)}=\int_{|z|}^1\om(s)\,ds,\quad z\in \D.
    \end{equation*}
A weight $\om$ belongs to $\DD$ if there exists $C=C(\om)\ge 1$ such that $\widehat{\om}(r)\le C\widehat{\om}(\frac{1+r}{2})$ for all $0\le r<1$ \cite{PelRat2015}. Analogously, $\om \in \DDD$ if there exist $K=K(\om)>1$ and $C=C(\om)>1$ such that
    \begin{equation*}\label{Eq:definition-DDD}
    \begin{split}
    \widehat{\om}(r)\ge C \widehat{\om}\left(1-\frac{1-r}{K}\right), \quad 0\le r<1.
    \end{split}
    \end{equation*}
Class $\RR$ of so-called two-sided doubling weights is the intersection of $\DD$ and $\DDD$ \cite{TwoweightII}.
An alternative characterization can be given as follows:
A weight $\om$ belongs to $\RR$ if and only if there exist $C=C(\om)\ge1$, $\a=\a(\om)>0$ and $\b=\b(\om)\ge\a$ such that
    \begin{equation}\label{Eq:definition-R}
    \begin{split}
    C^{-1}\left(\frac{1-r}{1-s}\right)^{\a}\widehat{\om}(s)\le\widehat{\om}(r)\le C\left(\frac{1-r}{1-s}\right)^{\b}\widehat{\om}(s),\quad 0\le r\le s<1.
    \end{split}
    \end{equation}
In many cases this characterization is more practical than the original definition.

An inner function is a bounded analytic function having unimodular radial limits almost everywhere on the boundary $\T=\{z\in\C:|z|=1\}$ \cite{Duren1970, Mashreghi2013}.
An important subclass of inner functions consists of Blaschke products \cite{Colwell1985}.
For a given sequence $\{z_n\}\subset\D$ satisfying $\sum_n (1-|z_n|)<\infty$ and a real constant $\lambda,$ the Blaschke product
with zeros $\{z_n\}$ is defined by
    \begin{equation*}\label{Eq:Blaschke}
    B(z)=e^{i\lambda}\, \prod_{n}\frac{|z_n|}{z_n}\frac{z_n-z}{1-\overline{z}_nz}, \quad z\in \D.
    \end{equation*}
For $z_n=0$, the interpretation $|z_n|/z_n=-1$ is used.
Our first result characterizes those inner functions whose derivative belongs to the mixed norm space $A_\om^{p,q}$ satisfying certain regularity conditions.
This characterization is taking advantage of the fact that the Frostman shift $\Th_a$ of an arbitrary inner function $\Th$, defined by
    $$\Th_a(z)=\frac{\Th(z)-a}{1-\overline{a}\Th(z)}, \quad z\in \D,$$
is a Blaschke product for almost every $a\in \D$. More precisely, Frostman's result states that
the exceptional set $E_\Th$ where $\Th_a$ is not a Blaschke product has logarithmic capacity zero; see for instance \cite[Chapter~2,~Theorem~6.4]{Garnett1981}.

Before our first main result, we define $r_j=1-2^{-j}$ for $j\in \N\cup \{0\}$ and $D_\d=\{z\in \C: |z|\le\delta\}$ for $0<\d<1$.
Moreover, recall that
$f\lesssim g$ if there exists a constant $C=C(\cdot)>0$ such that $f\le Cg$, while $f\gtrsim g$ is understood in an analogous manner.
If $f\lesssim g$ and $f\gtrsim g$, then we write $f\asymp g$.
Here the letter $C=C(\cdot)$ is a positive constant whose value depends only on the parameters indicated in the parentheses,
and may change from one occurrence to another.

\begin{theorem}\label{Theo1b}
Let $\frac12<p<\infty$, $0<q<\infty$, $0<\d<1$ and $\om\in \RR$.  If $\Th$ is an inner function and either
    \begin{itemize}
    \item[\rm(a)] $\frac12<p\le 1$ and $\om$ satisfies
the right-hand inequality of \eqref{Eq:definition-R} for some $\b<2q-\frac{q}{p}$, or
    \item[\rm(b)] $1<p<\infty$, $\om$ satisfies
the right-hand inequality of \eqref{Eq:definition-R} for some $\b<q$ and the left-hand inequality for some $\a>q-\frac{q}{p}$,
    \end{itemize}
then
    \begin{equation}\label{eq:Thm1}
    \|\Th'\|_{A_\om^{p,q}}^q \asymp \sum_n \frac{\whw(r_n)}{(1-r_n)^{q-q/p}}\int_{D_\d}\upsilon_n(a)^{q/p} dA(a),
    \end{equation}
where $\{z_n(a)\}$ is the zero-sequence of $\Th_a$ and $\upsilon_n(a)=\#\{j:r_n\le |z_j(a)|<r_{n+1}\}$.
\end{theorem}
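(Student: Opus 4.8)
The plan is to pass, via Frostman shifts, from the inner function $\Th$ to Blaschke products, to estimate the mixed norm of the derivative of a Blaschke product in terms of the dyadic distribution of its zeros, and to recover \eqref{eq:Thm1} by integrating in $a$ and invoking Tonelli's theorem. Throughout, a zero $z$ of a Blaschke product is said to lie \emph{on level $n$} if $r_n\le|z|<r_{n+1}$.

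\emph{Step 1 (two elementary reductions).} By Frostman's theorem the exceptional set $E_\Th$ has logarithmic capacity zero, hence planar Lebesgue measure zero, so $\Th_a$ is a Blaschke product with zero-sequence $\{z_n(a)\}$ for almost every $a\in D_\d$, and it suffices to work with such $a$. Next, since $|\Th(z)|<1$ for $z\in\D$, for $|a|\le\d$ one has $1-\d\le|1-\overline{a}\,\Th(z)|\le1+\d$, whence
    $$
    |\Th_a'(z)|=\frac{(1-|a|^2)\,|\Th'(z)|}{|1-\overline{a}\,\Th(z)|^2}\asymp(1-|a|^2)\,|\Th'(z)|,\qquad z\in\D,
    $$
with comparison constants depending only on $\d$. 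Hence $M_p(r,\Th_a')\asymp(1-|a|^2)M_p(r,\Th')$ for every $0\le r<1$, so $\|\Th_a'\|_{A^{p,q}_\om}^q\asymp(1-|a|^2)^q\|\Th'\|_{A^{p,q}_\om}^q$, and integrating in $a$ over $D_\d$ gives
    $$
    \int_{D_\d}\|\Th_a'\|_{A^{p,q}_\om}^q\,dA(a)\asymp\|\Th'\|_{A^{p,q}_\om}^q.
    $$
It remains to compare the left-hand integral with the right-hand side of \eqref{eq:Thm1}; by Tonelli's theorem this follows once we prove matching upper and lower bounds between $\|\Th_a'\|_{A^{p,q}_\om}^q$ and $\sum_n\frac{\whw(r_n)}{(1-r_n)^{q-q/p}}\upsilon_n(a)^{q/p}$, valid after integration over $a$.

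\emph{Step 2 (upper bound).} For any Blaschke product $B$ with zeros $\{z_k\}$ one has $|B'(z)|\le\sum_k\frac{1-|z_k|^2}{|1-\overline{z_k}z|^2}$. Applying Minkowski's inequality in $M_p$ when $p\ge1$, or subadditivity of $t\mapsto t^p$ when $p\le1$, together with the classical Forelli--Rudin type estimate $\frac1{2\pi}\int_0^{2\pi}|1-\overline{w}\,re^{i\t}|^{-2p}\,d\t\asymp(1-|w|r)^{1-2p}$ --- which is finite \emph{precisely} because $p>\tfrac12$ --- one bounds $M_p(r,B')$, and then $M_p^q(r,B')$, by a sum indexed by the zeros of $B$. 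Collecting the zeros by level turns this into a double sum over the level $n$ of $z_k$ and the level $m$ of $r$; integrating against $\om(r)\,dr$, then over $a\in D_\d$, interchanging the (nonnegative) sums, and summing the resulting geometric-type series by means of \eqref{Eq:definition-R}, one arrives at $\lesssim\sum_n\frac{\whw(r_n)}{(1-r_n)^{q-q/p}}\int_{D_\d}\upsilon_n(a)^{q/p}\,dA(a)$. Exactly here the hypotheses enter: in case (a) the crucial summation is $\sum_{m\le n}\whw(r_m)(1-r_m)^{q/p-2q}\asymp\whw(r_n)(1-r_n)^{q/p-2q}$, which holds if and only if $\b<2q-\tfrac qp$; in case (b) the same bookkeeping requires $\b<q$ to handle the sum over $m\le n$ and $\a>q-\tfrac qp$ (the left-hand inequality of \eqref{Eq:definition-R}) to absorb the tail $\sum_{m>n}\whw(r_m)$. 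When $p>1$ the zeros lying on a common level must be combined by a Carleson-measure / Schur-test argument rather than by plain Minkowski, so that $\upsilon_n$ such zeros contribute like $\upsilon_n^{1/p}$ and not like $\upsilon_n$; the averaging over $a$ is also what prevents ``clustered'' zero configurations from spoiling this exponent.

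\emph{Step 3 (lower bound and conclusion).} Discarding all but the levels of $r$, and using $\int_{r_n}^{r_{n+1}}\om(r)\,dr\asymp\whw(r_n)$ (a consequence of $\om\in\RR$) together with the monotonicity of $r\mapsto M_p(r,B')$, we get $\|B'\|_{A^{p,q}_\om}^q\gtrsim\sum_n\whw(r_n)\,M_p^q\!\big(r_n,B'\big)$ for a.e.\ $a$, with $B=\Th_a$. The heart of the matter is the pointwise estimate
    $$
    M_p(r,B')\gtrsim\upsilon_n^{1/p}\,(1-r_n)^{1/p-1},\qquad r\in[r_n,r_{n+1}),
    $$
obtained by localizing near the $\upsilon_n$ zeros of $B$ on level $n$: on the arcs of length $\asymp(1-r_n)$ centred at their arguments the single-factor derivatives satisfy $|b_{z_k}'(re^{i\t})|\asymp(1-r_n)^{-1}$, and one must verify that the product of the remaining Blaschke factors stays bounded away from $0$ on a definite proportion of these arcs. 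Combining, $\|\Th_a'\|_{A^{p,q}_\om}^q\gtrsim\sum_n\frac{\whw(r_n)}{(1-r_n)^{q-q/p}}\upsilon_n(a)^{q/p}$; integrating over $D_\d$, applying Tonelli's theorem, and chaining with Step~1 yields \eqref{eq:Thm1}. The main obstacle is precisely this control of $\prod_{j\ne k}|b_{z_j}|$ on the localizing arcs when the zeros fail to be uniformly separated --- the standard device is to split the zeros on level $n$ into boundedly many pseudohyperbolically separated subfamilies (equivalently, into clusters at the scale of level $n$) and to argue on each of them --- while a secondary difficulty is the $p>1$ upper bound, where one must estimate the $L^p$-norm of a sum of Poisson-type kernels and cannot merely invoke Minkowski's inequality.
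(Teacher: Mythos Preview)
Your Step~1 and the \emph{upper} bound (Step~2) are fine and agree with the paper, which simply invokes the cited estimate (Lemma~A here) $\|B'\|_{A_\om^{p,q}}^q\lesssim\sum_n\frac{\whw(r_n)\upsilon_n^{q/p}}{(1-r_n)^{q-q/p}}$, valid for \emph{every} Blaschke product, and integrates in $a$. (Incidentally, the averaging over $a$ plays no role in the upper bound for $p>1$; Lemma~A already absorbs clustered zeros.)

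The gap is in Step~3. The pointwise inequality you aim for,
\[
\|\Th_a'\|_{A_\om^{p,q}}^q\ \gtrsim\ \sum_n\frac{\whw(r_n)}{(1-r_n)^{q-q/p}}\,\upsilon_n(a)^{q/p},
\]
is \emph{false} for general Blaschke products; this is exactly why the converse in Lemma~A requires the zeros to be a finite union of separated sequences, and Frostman's theorem gives no such separation for $\Th_a$. Your proposed fix---splitting the $\upsilon_n$ zeros on level $n$ into separated subfamilies---does not recover the exponent: if $K_n$ subfamilies are needed, the subfamily bounds sum to at most $K_n^{\,1-q/p}\upsilon_n^{q/p}$, and there is no uniform control on $K_n$; nor can one bound $\|B'\|$ from below by a sum of subproduct norms. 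The ``one must verify that $\prod_{j\neq k}|b_{z_j}|$ stays bounded away from $0$'' step is precisely what cannot be verified here.

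The paper handles the lower bound for $\|\Th'\|$ by a completely different mechanism that never needs a pointwise converse of Lemma~A. For $p\ge1$ it starts from the Poisson-type minorant $\log\frac1{|\Th_a(z)|}\ge\sum_k\frac{(1-|z|^2)(1-|z_k(a)|^2)}{|1-\overline{z_k(a)}z|^2}$ and the Forelli--Rudin estimate to get $\upsilon_n(a)(1-r_n)\lesssim\int_0^{2\pi}\bigl(\log\frac1{|\Th_a(re^{i\theta})|}\bigr)^p\,d\theta$. Then---and this is the key point---it integrates in $a$ \emph{before} anything else, using the elementary lemma
\[
\int_{D_\delta}\Bigl(\log\Bigl|\frac{1-\overline{a}w}{w-a}\Bigr|\Bigr)^p dA(a)\ \lesssim\ (1-|w|)^p,\qquad w\in\D,
\]
applied at $w=\Th(re^{i\theta})$. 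After Minkowski/H\"older this yields $S\lesssim\int_0^1\bigl(\int_0^{2\pi}\bigl(\frac{1-|\Th(re^{i\theta})|}{1-r}\bigr)^p d\theta\bigr)^{q/p}\frac{\whw(r)}{1-r}\,dr$, which is $\asymp\|\Th'\|_{A_\om^{p,q}}^q$ by the Schwarz--Pick lemma (Lemma~B). The case $\tfrac12<p<1$ is then bootstrapped from the case $p=1$ via a change of weight. So the averaging in $a$ is not a Tonelli formality; it is the device that replaces the (unavailable) pointwise lower bound, and the passage goes through $1-|\Th|$ rather than through $|\Th_a'|$.
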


Here and hereafter, $dA(z)$ will stand for the $2$-dimensional Lebesgue measure $dxdy.$
The argument of Theorem~\ref{Theo1b} utilizes the recent results regarding the derivative of inner functions in $A_\om^{p,q}$ \cite{R2017c, R2017b},
and certain estimates for $\Th_a$ originated to \cite{Jevtic2009}.
Using Theorem~\ref{Theo1b} together with a connection between the mixed norm and Besov spaces \cite{Flett1972},
we can prove a streamlined version of \cite[Theorem~3.3]{GGJ2011}; %with an additional hypothesis;
see Corollary~\ref{coroBesov} in Section~\ref{Sec4}. Hence it does not come as a surprise that the proofs of Theorem~\ref{Theo1b}
and \cite[Theorem~3.3]{GGJ2011} have some similarities.

Modifying the argument of Theorem~\ref{Theo1b} for $p\ge q$,
we may remove the integral over $D_\d$ in the statement by assuming $a\in \D\setminus E_\Th$; see Theorem~\ref{Theo1} below.
More precisely, we verify as an auxiliary result that the Lebesgue integral
over $D_\d$ can be replaced by a certain integral with respect to a probability
measure supported in a compact subset of $\D$;
see Lemma~\ref{lemma3} in Section~\ref{Sec5}. Using this observation, we can prove the desired result for $p=1$ in a similar manner as in Theorem~\ref{Theo1b}.
Finally the assertion follows from an application of some nesting properties for the derivative of inner functions in the mixed norm spaces.
These nesting properties can be obtained by using a consequence of Theorem~\ref{Theo1b}.

\begin{theorem}\label{Theo1}
Let $\frac12<p<\infty$ and $0<q\le p$. Assume that $\om$, $\Th$ and $\upsilon_n(a)$ are as in Theorem~\ref{Theo1b}.
Then the following statements are equivalent:
\begin{itemize}
    \item[\rm(i)] $\Th'\in A_\om^{p,q}$.
    \item[\rm(ii)] There exists a set $\CZ\subset\D$ of logarithmic capacity zero such that
    \begin{equation}\label{E1:t1}
    \sum_n \frac{\whw(r_n)\upsilon_n(a)^{q/p}}{(1-r_n)^{q-q/p}}<\infty
    \end{equation}
    for every $a\in \D\setminus \CZ.$
    \item[\rm(iii)]  There exists $a\in \D\setminus E_\Th$ such that \eqref{E1:t1} holds.
\end{itemize}
\end{theorem}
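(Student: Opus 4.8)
The plan is to establish the cycle of implications $\mathrm{(i)}\Rightarrow\mathrm{(ii)}\Rightarrow\mathrm{(iii)}\Rightarrow\mathrm{(i)}$, with the genuinely new work concentrated in the equivalence between the condition $\Th'\in A_\om^{p,q}$ and the pointwise-in-$a$ summability condition \eqref{E1:t1}, exploiting that we are now in the range $q\le p$. First I would record the straightforward implication $\mathrm{(ii)}\Rightarrow\mathrm{(iii)}$: since a set of logarithmic capacity zero cannot contain $\D\setminus E_\Th$ (the exceptional set $E_\Th$ itself has logarithmic capacity zero by Frostman's theorem, and $\D$ does not), there must exist $a\in\D\setminus E_\Th$ with $a\notin\CZ$, and for such $a$ the function $\Th_a$ is a Blaschke product and \eqref{E1:t1} holds.

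Next I would treat the core step, which proceeds via the auxiliary Lemma~\ref{lemma3} announced in the excerpt: the Lebesgue integral over $D_\d$ appearing in Theorem~\ref{Theo1b} may be replaced by integration against a suitable probability measure $\mu$ supported on a compact subset of $\D\setminus E_\Th$. Granting this, one first handles the base case $p=q=1$: rerunning the proof scheme of Theorem~\ref{Theo1b} with $dA(a)$ replaced by $d\mu(a)$ gives
\begin{equation*}
\|\Th'\|_{A_\om^{1,1}}\asymp \sum_n \whw(r_n)\int \upsilon_n(a)\,d\mu(a),
\end{equation*}
and since $\Th_a$ is a genuine Blaschke product for every $a$ in the support of $\mu$, Fubini together with the fact that $\mu$ is a probability measure shows that $\Th'\in A_\om^{1,1}$ forces $\sum_n\whw(r_n)\upsilon_n(a)<\infty$ for $\mu$-a.e.\ $a$, hence for at least one admissible $a$, giving $\mathrm{(i)}\Rightarrow\mathrm{(iii)}$ in this case; conversely, if \eqref{E1:t1} holds for one $a\in\D\setminus E_\Th$ then $\Th'$ and $\Th_a'$ differ by a harmless factor and the series comparison runs backward to give $\Th'\in A_\om^{1,1}$, i.e.\ $\mathrm{(iii)}\Rightarrow\mathrm{(i)}$. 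For general $\frac12<p<\infty$ and $0<q\le p$ I would pass from the $p=1$ case by the nesting properties for the derivative of inner functions in mixed norm spaces alluded to in the excerpt: these follow from a consequence of Theorem~\ref{Theo1b} (monotonicity of the right-hand side of \eqref{eq:Thm1} in the parameters, exploiting $q\le p$ so that $\upsilon_n(a)^{q/p}\le \max(1,\upsilon_n(a))$ behaves well), and they let one transfer membership of $\Th'$ in $A_\om^{p,q}$ to and from the $L^1$-scale while simultaneously transferring the series \eqref{E1:t1}; the hypothesis $q\le p$ is exactly what makes the exponent $q/p\le 1$ so these comparisons are one-directional-safe.

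Finally, $\mathrm{(iii)}\Rightarrow\mathrm{(ii)}$ is obtained by taking $\CZ=E_\Th$: I would show that if \eqref{E1:t1} holds for some single $a_0\in\D\setminus E_\Th$, then it in fact holds for every $a\in\D\setminus E_\Th$. This is because for any two points $a,a'\in\D\setminus E_\Th$ the Blaschke products $\Th_a$ and $\Th_{a'}$ are Frostman shifts of the same inner function, so their derivatives satisfy $|\Th_a'|\asymp|\Th_{a'}'|$ up to constants depending only on $|a|,|a'|$, whence the counting functions $\upsilon_n(a)$ and $\upsilon_n(a')$ are comparable in the averaged sense controlling \eqref{E1:t1}; alternatively, and more robustly, one routes through $\mathrm{(iii)}\Rightarrow\mathrm{(i)}\Rightarrow$ (the $\mu$-a.e.\ version) and then upgrades ``$\mu$-a.e.'' to ``everywhere outside a capacity-zero set'' using that $E_\Th$ is precisely the capacity-zero obstruction. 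I expect the main obstacle to be the proof of Lemma~\ref{lemma3} — establishing that the Lebesgue average over $D_\d$ can be legitimately replaced by a probability measure on a compact subset avoiding $E_\Th$ without destroying the two-sided estimate — since this requires controlling the $a$-dependence of the constants in the Jevti\'c-type estimates for $\Th_a$ uniformly on compacta, and it is the step that genuinely uses $p\ge q$ rather than merely inheriting it from Theorem~\ref{Theo1b}.
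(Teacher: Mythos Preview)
Your overall skeleton is right---the cycle (ii)$\Rightarrow$(iii)$\Rightarrow$(i) is easy, and the substance lies in (i)$\Rightarrow$(ii)---but your treatment of the capacity step has a genuine gap.

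The claim that one may take $\CZ=E_\Th$, i.e.\ that \eqref{E1:t1} holds for \emph{every} $a\in\D\setminus E_\Th$ once it holds for one, is not justified and the paper does not assert it. Knowing $\Th'\in A_\om^{p,q}$ gives $\Th_a'\in A_\om^{p,q}$ for every $a$, but deducing \eqref{E1:t1} from this is precisely the necessary direction of Lemma~\ref{lemma1}, which requires the zeros of $\Th_a$ to form a finite union of separated sequences; for a generic $a\in\D\setminus E_\Th$ there is no reason this holds. Your ``comparability of $\upsilon_n(a)$ and $\upsilon_n(a')$'' heuristic does not follow from $|\Th_a'|\asymp|\Th_{a'}'|$.

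The paper's route to (i)$\Rightarrow$(ii) is different from what you describe, and the difference is exactly where logarithmic capacity enters. One does \emph{not} pick a single measure $\mu$ supported off $E_\Th$. Instead, let $\sigma$ be an \emph{arbitrary} probability measure supported in $D_\d$ whose logarithmic potential is bounded, i.e.\ satisfying \eqref{Eq:l3_0}. Lemma~\ref{lemma3} then gives the pointwise bound $\int_{D_\d}\log\frac{1}{|\Th_a(z)|}\,d\sigma(a)\lesssim 1-|\Th(z)|$, and running the Theorem~\ref{Theo1b} argument with $d\sigma$ in place of $dA$ (using \eqref{Eq:thm1-1} with exponent $1$, H\"older with $q/p\le 1$, and Corollary~\ref{coro2} to pass from $A_{\om_x}^{1,q/p}$ back to $A_\om^{p,q}$) yields
\[
\int_{D_\d}\sum_n \frac{\whw(r_n)\upsilon_n(a)^{q/p}}{(1-r_n)^{q-q/p}}\,d\sigma(a)\lesssim \|\Th'\|_{A_\om^{p,q}}^q<\infty.
\]
Hence the set where \eqref{E1:t1} fails has $\sigma$-measure zero \emph{for every} such $\sigma$. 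By the definition of logarithmic capacity (a compact set has positive capacity if and only if it carries a probability measure with bounded logarithmic potential), this is precisely what it means for that set to have capacity zero. The set $E_\Th$ plays no role in this step; the exceptional set $\CZ$ comes from potential theory, not from Frostman's theorem.
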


For instance, $\om_1(z)=(1-|z|)^{\a}$ and $\om_2(z)=(1-|z|)^\a\left(\log \frac{e}{1-|z|}\right)^\b$ satisfy
the hypotheses of $\om$ in Theorems~\ref{Theo1b}~and~\ref{Theo1} if $\max\{-1,q-\frac{q}p-1\}<\a<\min\{2q-\frac{q}p-1,q-1\}$ and $\b\in \R$.
%Note that, for $\om=\om_1$, symbol $A_\a^{p,q}$ is used instead of $A_\om^{p,q}$; moreover, write $A_\a^{p}=A_\a^{p,p}$.
Using this observation for $\om_1$ together with \cite[Lemma~3.1]{GGJ2011}, it is easy to check that \cite[Theorem~3]{Verbitsky1984} for $q\le p$
follows from Theorem~\ref{Theo1}. Our result contains also the case $\frac12<p<1$, unlike the original theorem.
Moreover, it is worth mentioning that \cite[Theorem~3]{Verbitsky1984} was stated without any proof. By studying $A_{\a}^{p,q}=A_{\om_1}^{p,q}$, we can also show that
hypothesis (b) in Theorems~\ref{Theo1b}~and~\ref{Theo1} is natural in a certain sense. More precisely, the only inner functions whose derivative belongs to
$A_{\a}^{p,q}$ for $\a\le q-\frac{q}p-1$ are finite Blaschke products \cite{R2017b}; and if $\a>q-1$, then the derivative of every inner function belongs to $A_{\a}^{p,q}$ by the Schwarz-pick lemma.

Next we turn our attention to the case of the weighted Bergman space $A_\om^p$, which is the mixed norm space with $p=q$.
If $\om(z)=(1-|z|)^\a$ for some $-1<\a<\infty$, then the notation $A_\a^p$ is used for $A_\om^p.$
Our first characterizations for inner functions $\Th$ whose derivative belongs to $A_\om^p$ are straightforward consequences of Theorem~\ref{Theo1}.
In addition, we give a generalization of the equivalence $\rm (a) \Leftrightarrow (b)$ in \cite[Theorem~1]{Grohn2016}.
The proof of this result is based on the existence of approximating Blaschke products \cite{Cohn1986}
and some estimates for $\|\Th'\|_{A_\om^p}$ \cite{PR2016, PGRR2016}.
The argument used here is essentially different from that used in \cite{Grohn2016}.
Applying the above-mentioned tools, we can also prove a
characterization which utilizes the so-called Carleson curve $\Gamma_\e=\Gamma_\e(\Th)$
(in the sense of W.~S.~Cohn) associated with $0<\e<1$ and an inner function $\Th$.
This characterization together with our auxiliary results generalizes \cite[Theorem~7]{Pelaez2008}.
The construction of Carleson curves in the case of the upper half-plane can be found in \cite[pp.~328--330]{Garnett1981}, and
its unit disc analogue has been studied, for instance, in \cite{Cohn1983, Cohn1986}.
Some properties of $\Gamma_\e$ are recalled also in Section~\ref{Sec6}.

\begin{theorem}\label{thm3}
Let $\frac12<p<\infty$, $\om\in \RR$, $\Th$ be an inner function and $\{z_n(a)\}$ the zero-sequence of $\Th_a$. Moreover, assume either
\begin{itemize}
    \item[\rm(a)] $\frac12<p\le 1$ and $\om$ satisfies
the right-hand inequality of \eqref{Eq:definition-R} for some $\b<2p-1$, or
    \item[\rm(b)] $1<p<\infty$, $\om$ satisfies
the right-hand inequality of \eqref{Eq:definition-R} for some $\b<p$ and the left-hand inequality for some $\a>p-1$.
    \end{itemize}
Then the following statements are equivalent:
\begin{itemize}
    \item[\rm(i)] $\Th'\in A_\om^{p}$.
    \item[\rm(ii)] There exists a set  $\CZ \subset\D$ of logarithmic capacity zero such that
    \begin{equation}\label{C1:t1}
    \sum_n \frac{\whw(z_n(a))}{(1-|z_n(a)|)^{p-1}}<\infty
    \end{equation}
    for every $a\in \D\setminus \CZ$.
    \item[\rm(iii)] There exists $a\in \D\setminus E_\Th$ such that \eqref{C1:t1} holds.
    \item[\rm(iv)] There exists $0<C<1$ such that
      \begin{equation*}
    \int_{\{z\in \D: |\Th(z)|<C\}} \frac{\whw(z)}{(1-|z|)^{p+1}}\,dA(z)<\infty.
    \end{equation*}
    \item[\rm(v)] There exists $0<\e<1$ such that
      \begin{equation*}
    \int_{\Gamma_\e} \frac{\whw(z)}{(1-|z|)^{p}}\,|dz|<\infty.
    \end{equation*}
\end{itemize}
\end{theorem}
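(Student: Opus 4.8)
The plan is to read off $(i)\Leftrightarrow(ii)\Leftrightarrow(iii)$ from Theorem~\ref{Theo1}, and then to prove $(i)\Leftrightarrow(iv)$ and $(iv)\Leftrightarrow(v)$ using the known estimates for $\|\Th'\|_{A_\om^p}$ together with the geometry of sublevel sets and of Cohn's Carleson curves.

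For the first part I would apply Theorem~\ref{Theo1} with $q=p$ (and any fixed $\d\in(0,1)$): then the hypotheses (a), (b) of Theorem~\ref{Theo1b} reduce exactly to (a), (b) of the present statement, and $\upsilon_n(a)^{q/p}=\upsilon_n(a)$. It remains to match the sum in \eqref{E1:t1} (with $q=p$) with the one in \eqref{C1:t1}. Distributing the zeros $z_j(a)$ of $\Th_a$ among the dyadic annuli $\{z:r_n\le|z|<r_{n+1}\}$, each of which carries $\upsilon_n(a)$ of them, and using that $\om\in\RR$ keeps $\whw(z)\asymp\whw(r_n)$ and $1-|z|\asymp 1-r_n$ on the $n$th annulus by \eqref{Eq:definition-R}, one gets
    \begin{equation*}
    \sum_n\frac{\whw(z_n(a))}{(1-|z_n(a)|)^{p-1}}\asymp\sum_n\frac{\whw(r_n)\,\upsilon_n(a)}{(1-r_n)^{p-1}}
    \end{equation*}
with comparison constants independent of $a$. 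Hence \eqref{C1:t1} converges if and only if \eqref{E1:t1} does, and $(i)$, $(ii)$, $(iii)$ are equivalent by Theorem~\ref{Theo1}.

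For $(i)\Leftrightarrow(iv)$ I would combine two ingredients from \cite{PR2016, PGRR2016} and \cite{Cohn1986}: the comparison $\|\Th'\|_{A_\om^p}\asymp\|B'\|_{A_\om^p}$, where $B$ is an approximating Blaschke product of $\Th$ at a small level $\e<C$, and the estimate $\|B'\|_{A_\om^p}^p\asymp\sum_k\whw(z_k)(1-|z_k|)^{1-p}$, which is valid because the zeros $\{z_k\}$ of $B$ form a Carleson sequence (so that the counting function is bounded on Whitney squares). Since these zeros net the sublevel set, $\{|\Th|<\e\}\subseteq\bigcup_kD_k\subseteq\{|\Th|<C\}$ for fixed pseudohyperbolic discs $D_k$ about $z_k$, and since $\om\in\RR$ makes $\whw$ essentially constant on each $D_k$, one obtains
    \begin{equation*}
    \sum_k\frac{\whw(z_k)}{(1-|z_k|)^{p-1}}\asymp\int_{\{|\Th|<C\}}\frac{\whw(z)}{(1-|z|)^{p+1}}\,dA(z),
    \end{equation*}
which, combined with the previous two comparisons, gives $(i)\Leftrightarrow(iv)$. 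The two technical points are the independence of the threshold $C$, handled by the monotonicity and the doubling of the family $\{\{|\Th|<c\}\}_{0<c<1}$ inherent to Cohn's construction, and the matching of the exponents of $1-|z|$ and of $\om$, a computation with \eqref{Eq:definition-R}. This is the extension of $(a)\Leftrightarrow(b)$ of \cite[Theorem~1]{Grohn2016} from power weights to $\om\in\RR$.

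Finally, $(iv)\Leftrightarrow(v)$ would follow by slicing the area integral in $(iv)$ across the Carleson curve $\Gamma_\e$. I would use that $\Gamma_\e=\Gamma_\e(\Th)$ is a rectifiable system of curves contained in $\{|\Th|\le\e\}$ around which, at slightly smaller levels, the sublevel set is a collar of Whitney width $\asymp 1-|z|$ (the disc analogue of \cite[pp.~328--330]{Garnett1981}, see also \cite{Cohn1983, Cohn1986}). Discretising $\{|\Th|<\e\}$ and $\Gamma_\e$ simultaneously at the Whitney scale, and using that $\whw(z)(1-|z|)^{-p-1}\asymp\whw(w)(1-|w|)^{-p-1}$ for pseudohyperbolically close $z,w$ because $\om\in\RR$, one gets
    \begin{equation*}
    \int_{\{|\Th|<\e\}}\frac{\whw(z)}{(1-|z|)^{p+1}}\,dA(z)\asymp\int_{\Gamma_\e}\frac{\whw(z)}{(1-|z|)^{p}}\,|dz|,
    \end{equation*}
which, together with the threshold-stability already used for $(iv)$, yields $(iv)\Leftrightarrow(v)$; alternatively one runs the $\|\Th'\|_{A_\om^p}$-estimate in its Carleson-curve form, where Lemma~\ref{lemma3} and the approximating Blaschke products again enter, generalizing \cite[Theorem~7]{Pelaez2008}. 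I expect the main obstacle to be the two-sided comparison behind $(i)\Leftrightarrow(iv)$: harvesting a lower bound for $\|\Th'\|_{A_\om^p}$ from a definite portion of $\{|\Th|<C\}$, and carrying the passage from finiteness of the sublevel integral back to $\Th'\in A_\om^p$ through an approximating Blaschke product with constants uniform in the various thresholds; once these are in place the remaining steps are routine manipulations with \eqref{Eq:definition-R}.
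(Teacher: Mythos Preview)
Your plan is correct and mirrors the paper's proof closely: the paper also reads off $(i)\Leftrightarrow(ii)\Leftrightarrow(iii)$ from Theorem~\ref{Theo1} with $q=p$ after the same dyadic regrouping you describe, and it proves $(i)\Leftrightarrow(iv)$ via a separate proposition (Proposition~\ref{prop1}) using exactly your ingredients---Lemma~\ref{lemma2} for the trivial bound $\|\Th'\|_{A_\om^p}^p\gtrsim I_C$, and approximating Blaschke products together with the separated-zeros estimate for the reverse bound via the chain $\|\Th'\|_{A_\om^p}^p\asymp\sum_n\whw(z_n)(1-|z_n|)^{1-p}\asymp\int_{\bigcup\Delta(z_n)}\lesssim I_C$.

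Two small deviations are worth noting. First, the paper links $(v)$ directly to $(i)$ rather than to $(iv)$: it uses property~(2) of $\Gamma_\e$ (each arc $I_n$ has $|I_n|\asymp 1-|z_n|$ with $z_n$ its midpoint) and property~(3) (the $\{z_n\}$ are the zeros of an approximating Blaschke product $B_\Th$) to obtain
\[
\int_{\Gamma_\e}\frac{\whw(z)}{(1-|z|)^p}\,|dz|\asymp\sum_n\frac{\whw(z_n)}{(1-|z_n|)^{p-1}}\asymp\|B_\Th'\|_{A_\om^p}^p\asymp\|\Th'\|_{A_\om^p}^p,
\]
which is cleaner than the collar comparison you sketch, though yours would also work. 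Second, Lemma~\ref{lemma3} plays no role here---it is a capacity/potential estimate used only inside the proof of Theorem~\ref{Theo1}; for $(v)$ the relevant inputs are just the structural properties (1)--(3) of $\Gamma_\e$ together with Lemmas~\ref{lemma1} and~\ref{lemma2}.
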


As mentioned above, Theorem~\ref{thm3} implies a part of \cite[Theorem~1]{Grohn2016}.
In addition, the essential contents of classical results
\cite[Theorem~6.2]{Ahern1979} and \cite[Theorem~3]{Cohn1983} are consequences of Theorem~\ref{thm3}.
All of these results are contained in the following corollary.

\begin{corollary}\label{coro:Hp}
Let $\frac12<p<1$, $\Th$ be an inner function and $\{z_n(a)\}$ the zero-sequence of $\Th_a$. Then the following statements are equivalent:
\begin{itemize}
    \item[\rm(i)] $\Th'\in H^p$.
    \item[\rm(ii)] $\Th'\in A_\a^{p+\a+1}$ for every $-1<\a<\infty$.
    \item[\rm(iii)] $\Th'\in A_\a^{p+\a+1}$ for some $-1<\a<\infty$.
    \item[\rm(iv)] There exists a set $\CZ\subset \D$ of logarithmic capacity zero such that
    \begin{equation}\label{Eq:Hp-cond}
    \begin{split}
    \sum_n (1-|z_n(a)|)^{1-p}<\infty
    \end{split}
    \end{equation}
    for every $a\in \D\setminus \CZ$.
    \item[\rm(v)] There exists $a\in \D\setminus E_\Th$ such that \eqref{Eq:Hp-cond} holds.
    \item[\rm(vi)] There exists $0<C<1$ such that
      \begin{equation*}
    \int_{\{z\in \D: |\Th(z)|<C\}} \frac{dA(z)}{(1-|z|)^{p+1}}<\infty.
    \end{equation*}
    \item[\rm(vii)] There exists $0<\e<1$ such that
      \begin{equation*}
    \int_{\Gamma_\e} \frac{|dz|}{(1-|z|)^{p}}<\infty.
    \end{equation*}
\end{itemize}
\end{corollary}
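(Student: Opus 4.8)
The plan is to deduce the corollary from Theorem~\ref{thm3} by specialising that result to the standard radial weights and then noticing that the conditions it produces do not see the weight; the Hardy space statement~(i) is attached afterwards by means of a classical description of $H^p$-membership of inner-function derivatives. Concretely, I would fix $-1<\a<\infty$, put $\om_\a(z)=(1-|z|)^\a$ and $P=p+\a+1\in(\tfrac12,\infty)$, and note that $\whw_\a(z)=(1-|z|)^{\a+1}/(\a+1)$, so $\om_\a\in\RR$ and $\om_\a$ satisfies both inequalities of \eqref{Eq:definition-R} with exponent $\a+1$ on each side. A short check shows that the pair $(\om_\a,P)$ meets hypothesis (a) or (b) of Theorem~\ref{thm3} for \emph{all} such $\a$ and all $\tfrac12<p<1$: if $P\le1$ one needs $\a+1<2P-1$, i.e. $\a>-2p$, which holds since $\a>-1>-2p$; if $P>1$ one needs $\a+1<P$ (true as $p>0$) and $\a+1>P-1$ (true as $p<1$). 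Hence Theorem~\ref{thm3} applies to $A_{\om_\a}^{P}=A_\a^{p+\a+1}$.

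Next I would substitute $\whw_\a$ into the conclusions of Theorem~\ref{thm3}. Using $P-1=p+\a$ and $P+1=p+\a+2$ and cancelling the harmless constant $1/(\a+1)$, condition (ii) of Theorem~\ref{thm3} turns into exactly \eqref{Eq:Hp-cond} (for $a$ outside a set of logarithmic capacity zero), condition (iii) into \eqref{Eq:Hp-cond} for a single $a\notin E_\Th$, condition (iv) into $\int_{\{|\Th|<C\}}(1-|z|)^{-p-1}\,dA(z)<\infty$, and condition (v) into $\int_{\Gamma_\e}(1-|z|)^{-p}\,|dz|<\infty$. None of these four statements involves $\a$, and they are precisely items (iv)--(vii) of the corollary. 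Thus, for each fixed $\a$, Theorem~\ref{thm3} gives the equivalence of $\Th'\in A_\a^{p+\a+1}$ with each of (iv), (v), (vi), (vii). Since these do not depend on $\a$, the equivalences among (iv)--(vii) follow at once; furthermore (ii)$\Rightarrow$(iii) is trivial, (iii) with some $\a_0$ forces (iv) through Theorem~\ref{thm3}, and (iv) forces $\Th'\in A_\a^{p+\a+1}$ for \emph{every} $\a$, which is (ii). Hence (ii)--(vii) are all equivalent.

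It remains to bring in (i). For this I would use the identity $\Th_a'(z)=(1-|a|^2)\Th'(z)(1-\overline a\,\Th(z))^{-2}$ together with $1-|a|\le|1-\overline a\,\Th(z)|\le2$ for $z\in\D$, which gives $|\Th_a'|\asymp|\Th'|$ in $\D$ with constants depending only on $|a|<1$; in particular $\Th'\in H^p$ if and only if $\Th_a'\in H^p$. Choosing $a\notin E_\Th$ makes $\Th_a$ a Blaschke product with zero-sequence $\{z_n(a)\}$, and the classical theorem of Ahern \cite[Theorem~6.2]{Ahern1979}, sharp precisely in the range $\tfrac12<p<1$, states that the derivative of a Blaschke product belongs to $H^p$ exactly when $\sum_n(1-|z_n|)^{1-p}<\infty$ over its zeros. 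Applied to $\Th_a$ this is the equivalence of (i) with item (v) of the corollary, so together with the previous paragraph all of (i)--(vii) are equivalent.

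The step I expect to be the real content, as opposed to bookkeeping, is the last one: locating a classical characterisation of $\Th'\in H^p$ for $\tfrac12<p<1$ that is already expressed in terms of the Frostman-shift zeros, and confirming that $H^p$-membership transfers between $\Th$ and $\Th_a$. The weight manipulations are routine, but one should watch the borderline case $P=p+\a+1=1$, which has to go through hypothesis (a) rather than (b), and should make sure that the multiplicative constants $1/(\a+1)$ from $\whw_\a$ really cancel out of every condition.
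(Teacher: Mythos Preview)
Your argument is correct. The reduction of (ii)--(vii) to Theorem~\ref{thm3} by plugging in $\om_\a(z)=(1-|z|)^\a$, $P=p+\a+1$, and observing that the resulting conditions are independent of $\a$, is exactly what the paper does (stated more tersely there as ``Theorem~\ref{thm3} gives $\rm (iii)\Leftrightarrow (iv) \Leftrightarrow (v) \Leftrightarrow (vi) \Leftrightarrow (vii)$''); your explicit verification of hypotheses (a)/(b), including the borderline $P\le1$, is fine.

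Where you diverge from the paper is in attaching (i). You pass from $\Th$ to a Frostman shift $\Th_a$ with $a\notin E_\Th$, use $|\Th'|\asymp|\Th_a'|$ to transfer $H^p$-membership, and then invoke Ahern's characterisation \cite[Theorem~6.2]{Ahern1979} of $B'\in H^p$ for Blaschke products in the range $\tfrac12<p<1$, landing directly on condition (v). The paper instead links (i) to (ii)/(iii) via Cohn's \emph{approximating} Blaschke product $B_\Th$: it uses \cite[Theorem~2]{PR2016} to rewrite $\|\Th'\|_{H^p}$ in terms of $1-|\Th|$, replaces $\Th$ by $B_\Th$, applies \cite[Theorem~1]{Cohn1983} to $B_\Th$ (whose zeros are uniformly separated), and then comes back to $A_\a^{p+\a+1}$ through Lemmas~\ref{lemma1} and~\ref{lemma2}. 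Your route is shorter and avoids approximating Blaschke products altogether, at the cost of importing Ahern's full theorem as a black box; the paper's route stays within the machinery already set up in the article and, in fact, rederives Ahern's and Cohn's results as special cases rather than citing them as input.
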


The remainder of this note is organized as follows. Some auxiliary results are stated in Section~\ref{Sec2}.
Theorems~\ref{Theo1b},~\ref{Theo1}~and~\ref{thm3} are proved in Sections~\ref{Sec3},~\ref{Sec5}~and~\ref{Sec6}, respectively.
Consequences of Theorem~\ref{Theo1b} are stated in Section~\ref{Sec4}, and the proof of Corollary~\ref{coro:Hp} can be found in Section~\ref{Sec7}.
In addition, the last section contains an example and some remarks.

\section{Auxiliary results}\label{Sec2}

We begin by stating a sufficient condition for the derivative of a Blaschke product $B$ to be in $A_\om^{p,q}$ \cite{R2017c}. In addition, it is mentioned that the condition is necessary if the zero-sequence of $B$ is a
finite union of separated sequences. Before this result we recall that a sequence $\{z_n\}\subset \D$ is called separated if there exists $\d=\d(\{z_n\})>0$ such that
    \begin{equation*}\label{separated}
    \left|\frac{z_n-z_k}{1-\overline{z}_nz_k}\right|>\d, \quad n\neq k.
    \end{equation*}

\begin{letterlemma}\label{lemma1}
Let $\frac12<p<\infty$, $0<q<\infty$, $\om\in \RR$ and $B$ be the Blaschke product with zeros $\{z_j\}$. If either
    \begin{itemize}
    \item[\rm(a)] $\frac12<p\le 1$ and $\om$ satisfies
the right-hand inequality of \eqref{Eq:definition-R} for some $\b<2q-\frac{q}{p}$, or
    \item[\rm(b)] $1<p<\infty$, $\om$ satisfies
the right-hand inequality of \eqref{Eq:definition-R} for some $\b<q$ and the left-hand inequality for some $\a>q-\frac{q}{p}$,
    \end{itemize}
then
    \begin{equation*}
    \|B'\|_{A_\om^{p,q}}^q \lesssim \sum_n \frac{\whw(r_n)\upsilon_n^{q/p}}{(1-r_n)^{q-q/p}},
    \end{equation*}
where $\upsilon_n=\#\{j:r_n\le |z_j|<r_{n+1}\}$.
If, in addition to \emph{(a)} or \emph{(b)}, $\{z_j\}$ is a finite union of separated sequences, then
     \begin{equation*}
    \|B'\|_{A_\om^{p,q}}^q \asymp \sum_n \frac{\whw(r_n)\upsilon_n^{q/p}}{(1-r_n)^{q-q/p}}.
    \end{equation*}
\end{letterlemma}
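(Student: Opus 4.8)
\smallskip
\noindent\emph{Proof plan.} The plan is to estimate the integral means $M_p(r,B')$ in terms of the location of the zeros and then to run a single weighted one--dimensional integral, which makes the role of the exponents transparent. Write $b_a$ for the elementary Blaschke factor with zero $a\in\D$, so that $|b_a'(z)|=(1-|a|^2)/|1-\overline{a}z|^2$ and $|b_a(z)|\le1$; the product rule gives the pointwise bound $|B'(z)|\le\sum_j|b_{z_j}'(z)|$. Combined with the elementary asymptotics $\int_0^{2\pi}|1-\overline{a}re^{i\t}|^{-2p}\,d\t\asymp(1-|a|r)^{1-2p}$, valid exactly because $p>\tfrac12$, this gives $\int_0^{2\pi}|b_a'(re^{i\t})|^p\,d\t\asymp(1-|a|)^p(1-|a|r)^{1-2p}$. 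I would then sort the zeros into the dyadic annuli $A_n=\{z:r_n\le|z|<r_{n+1}\}$, so that for $z_j\in A_n$ this quantity is comparable, uniformly in $j$, to $x_n(r):=(1-r_n)^p(1-r_nr)^{1-2p}$.

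Consider first case (a) with $q\le p$ (so in particular $q\le p\le1$). Subadditivity of $t\mapsto t^p$ gives $M_p(r,B')^p\lesssim\sum_n\upsilon_n\,x_n(r)$, and a second use of subadditivity (now of $t\mapsto t^{q/p}$) yields
$$\|B'\|_{A_\om^{p,q}}^q=\int_0^1\!\big(M_p(r,B')^p\big)^{q/p}\om(r)\,dr\lesssim\sum_n\upsilon_n^{q/p}\int_0^1 x_n(r)^{q/p}\om(r)\,dr,$$
which is where the sharp power $\upsilon_n^{q/p}$ is produced. It then remains to compute
$$\int_0^1 x_n(r)^{q/p}\om(r)\,dr=(1-r_n)^q\int_0^1\frac{\om(r)\,dr}{(1-r_nr)^{(2p-1)q/p}}\asymp\frac{\whw(r_n)}{(1-r_n)^{(2p-1)q/p-q}}=\frac{\whw(r_n)}{(1-r_n)^{q-q/p}},$$
the middle step being the standard weighted estimate $\int_0^1\om(r)(1-r_nr)^{-\gamma}\,dr\asymp\whw(r_n)(1-r_n)^{-\gamma}$, which holds precisely for $\gamma>\b$; here $\gamma=(2p-1)q/p=2q-q/p$, so $\b<2q-q/p$ is exactly what is needed. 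Summing over $n$ gives the claim. (The endpoint $p=1$ is handled identically using the exact value $\int_0^{2\pi}|b_a'(re^{i\t})|\,d\t=2\pi(1-|a|^2)/(1-|a|^2r^2)$.)

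The remaining cases require genuine modifications. When $q>p$ the function $t\mapsto t^{q/p}$ is superadditive, so the second step above fails; instead one must exploit that $n\mapsto x_n(r)$ decays geometrically on both sides of the index $n\asymp\log_2\tfrac1{1-r}$ (via Hölder, or via nesting properties of the mixed norm spaces) to still arrive at a bound of the form $\sum_n\upsilon_n^{q/p}\int_0^1 x_n(r)^{q/p}\om(r)\,dr$. When $p>1$, subadditivity of $t\mapsto t^p$ is unavailable and one works instead with Minkowski's inequality in $L^p(d\t)$, first within each annulus ($M_p(r,\sum_{z_j\in A_n}|b_{z_j}'|)\lesssim\upsilon_n\,x_n(r)^{1/p}$) and then across annuli; carrying out the ensuing radial integral now uses both the upper control $\b<q$ and the lower control $\a>q-q/p$, which is exactly the two--sided hypothesis of (b). Finally, for the reverse inequality under the extra assumption that $\{z_j\}$ is a finite union of, say, $m$ separated sequences, I would localise: split $[0,2\pi)$ into arcs $I_j$ of length $\asymp1-|z_j|$ about $\arg z_j$. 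Separation guarantees that on $I_j$, for $|z|$ comparable to $|z_j|$, the factor $|B/b_{z_j}|$ stays bounded below and the competing sum $\sum_{k\ne j}|b_{z_k}'|$ is dominated by $|b_{z_j}'|$, so $|B'|\gtrsim|b_{z_j}'|$ there; since at most $m$ of the $\upsilon_n$ zeros in $A_n$ can be mutually close, this produces $\gtrsim\upsilon_n/m$ essentially disjoint bumps and hence $M_p(r,B')^p\gtrsim\upsilon_n(1-r_n)^{1-p}$ for $r\in[r_n,r_{n+1}]$. Raising to the power $q/p$, integrating over $[r_n,r_{n+1}]$ using $\whw(r_n)-\whw(r_{n+1})\asymp\whw(r_n)$ (a consequence of $\om\in\DDD$, i.e.\ of the left--hand inequality of \eqref{Eq:definition-R}), and summing over $n$ gives the matching $\gtrsim$.

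The genuinely delicate points are two. First, producing the exact power $\upsilon_n^{q/p}$ on the counting function uniformly: for $q>p$ this is not a Minkowski argument, and for $p>1$ the bound $|B'|\le\sum_j|b_{z_j}'|$ is itself lossy near clustered zeros (it ignores the smallness of $\prod_{k\ne j}b_{z_k}$), so one has to retain more of the Blaschke structure, or invoke the available sharp estimates for derivatives of inner functions in weighted Bergman spaces. Second, the lower bound, where the separation hypothesis must be used twice — once to keep the remaining Blaschke factors from cancelling $|b_{z_j}'|$ on $I_j$, and once to guarantee that the $\upsilon_n$ bumps on a given annulus are essentially disjoint.
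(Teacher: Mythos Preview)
This lemma is not proved in the present paper: it is a letter lemma quoted from \cite{R2017c} (see the opening sentence of Section~\ref{Sec2}), so there is no in-paper argument to compare your proposal against.

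On its own merits, your plan is correct and complete only in the sub-case $q\le p\le1$ of (a): the two applications of subadditivity of $t\mapsto t^s$, $s\le1$, do produce the sharp count $\upsilon_n^{q/p}$, and the weighted one-variable estimate $\int_0^1\om(r)(1-r_nr)^{-\gamma}\,dr\asymp\whw(r_n)(1-r_n)^{-\gamma}$ is indeed valid for $\gamma>\beta$. Your localisation sketch for the lower bound under separation is also along the right lines.

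The remaining cases, however, are left as acknowledged gaps, and for $p>1$ the gap is structural rather than technical: the pointwise inequality $|B'|\le\sum_j|b_{z_j}'|$ \emph{cannot} deliver the power $\upsilon_n^{q/p}$. Take $B=b_a^{\,m}$ with $a\in A_N$; then $\sum_j|b_{z_j}'|=m|b_a'|$, Minkowski gives only $M_p(r,B')\le m\,M_p(r,b_a')$, and hence $\|B'\|_{A_\om^{p,q}}^q\le m^q\|b_a'\|_{A_\om^{p,q}}^q$, whereas the lemma asserts $\lesssim m^{q/p}$. Since $q>q/p$ when $p>1$, no refinement of the radial integral closes this; one must genuinely exploit the smallness of the remaining factors $\prod_{k\ne j}|b_{z_k}|$ that the crude bound discards. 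You say as much (``retain more of the Blaschke structure''), but the plan stops there. The actual route in \cite{R2017c} and its Bergman-space precursors \cite{PR2016,PGRR2016} goes through the Schwarz--Pick surrogate $1-|B|$ (cf.\ Lemma~\ref{lemma2} here) and the additivity of $\log|B|^{-1}$, which is also where the two-sided hypothesis on $\om$ in (b) really enters. The case $q>p$ of (a) is in the same unresolved state in your write-up.
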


We say that a weight $\om$ belongs to $\DD_p$ for $0<p<\infty$ if
    \begin{equation*} \label{Eq:DDp}
    \sup_{0<r<1}\frac{(1-r)^p}{\widehat{\om}(r)}\int_0^r\frac{\om(s)}{(1-s)^p}\,ds<\infty,
    \end{equation*}
and $\om \in\DDD_p$ if
    \begin{equation*} \label{Eq:DDp}
    \sup_{0<r<1}\frac{(1-r)^p}{\widehat{\om}(r)}\int_r^1\frac{\om(s)}{(1-s)^p}\,ds<\infty.
    \end{equation*}
It is worth noting that (a) and (b) in Lemma~\ref{lemma1} for $p\ge q$ can be replaced by the following conditions respectively:
    \begin{itemize}
    \item[\rm(A)] $\frac12<p\le 1$ and $\om\in \DD_{2q-q/p}$,
    \item[\rm(B)] $1<p<\infty$ and $\om\in \DD_q \cap \DDD_{q-q/p}$.
    \end{itemize}
This observation is relevant because conditions (a) and (b) imply (A) and (B), respectively.
More precisely, if the right-hand inequality of \eqref{Eq:definition-R} is satisfied for some $\b=\b(\om)<p$, then $\om\in \DD_p$.
Similarly, if the left-hand inequality is satisfied for some $\a=\a(\om)>p$, then $\om\in \DDD_p$.
The validity of these implications can be checked by straightforward calculations based on integration by parts; see \cite{R2017c} for details.
In addition, we recall that $\om \in \DD$ if and only if $\om \in \DD_p$ for some $p$ \cite{PelRat2015}.

The next auxiliary result shows that, for $\om \in \RR \cap \DD_q$ and an inner function $\Th$,
we may use the Schwarz-Pick lemma inside the norm $\|\Th'\|_{A^{p,q}_\omega}$
without losing any essential information \cite{R2017b}. In addition, we give some modified asymptotic estimates for $\|\Th'\|_{A^{p,q}_\omega}$,
which are consequences of the following fact \cite{R2017b}: For $0<p,q<\infty $ and $\om\in \RR$,
   \begin{equation*}
    \begin{split}
    \|f\|_{A_\om^{p,q}}^q \asymp \int_0^1 M_p^q(r,f)\,\frac{\whw(r)}{1-r}\,dr
    \end{split}
    \end{equation*}
for all $f\in \HO(\D)$.

\begin{letterlemma}\label{lemma2}
Let $0<p,q<\infty$, $\om \in \RR \cap \DD_q$ and $\Th$ be an inner function. Then
    \begin{equation*}
    \begin{split}
    \|\Th'\|_{A^{p,q}_\omega}^q &\asymp \int_0^1 \left(\int_0^{2\pi} \left(\frac{1-|\Th(re^{i\t})|}{1-r}\right)^p d\t\right)^{q/p} \om(r)\, dr \\
    &\asymp \int_0^1 \left(\int_0^{2\pi} \left(\frac{1-|\Th(re^{i\t})|}{1-r}\right)^p d\t\right)^{q/p} \frac{\whw(r)}{1-r}\, dr \\
    &\asymp \int_0^1 \left(\int_0^{2\pi} |\Th'(re^{i\t})|^p d\t\right)^{q/p} \frac{\whw(r)}{1-r}\, dr.
    \end{split}
    \end{equation*}
\end{letterlemma}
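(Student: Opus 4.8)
The plan is to peel off the angular variable and reduce the whole chain to two one-dimensional weighted Hardy inequalities. Denote by $I_1,I_2,I_3$ the three quantities on the right of the asserted chain, so that $I_1$ carries the weight $\om(r)\,dr$ while $I_2,I_3$ carry $\frac{\whw(r)}{1-r}\,dr$, and the integrand of $I_1,I_2$ is built from $1-|\Th|$ whereas that of $I_3$ is built from $|\Th'|$. The equivalence $\|\Th'\|^q_{A^{p,q}_\om}\asymp I_3$ is immediate: it is the cited identity $\|f\|^q_{A^{p,q}_\om}\asymp\int_0^1 M_p^q(r,f)\frac{\whw(r)}{1-r}\,dr$, valid for $\om\in\RR$, applied to $f=\Th'$, together with $M_p^q(r,\Th')\asymp\big(\int_0^{2\pi}|\Th'(re^{i\t})|^p\,d\t\big)^{q/p}$. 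Next, the Schwarz--Pick lemma gives $|\Th'(z)|\le\frac{1-|\Th(z)|^2}{1-|z|^2}\le\frac{2(1-|\Th(z)|)}{1-|z|}$ for all $z\in\D$, so pointwise in $r$ the integrand of $I_3$ is at most a constant multiple of that of $I_1$ (equivalently of $I_2$); integrating against the respective weights gives $\|\Th'\|^q_{A^{p,q}_\om}\lesssim I_1$ and $I_3\lesssim I_2$. Hence all four quantities will be equivalent once we prove the reverse estimates $I_1\lesssim\|\Th'\|^q_{A^{p,q}_\om}$ and $I_2\lesssim\|\Th'\|^q_{A^{p,q}_\om}$.

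For the reverse estimates, fix $\rho\in(0,1)$. Since $\Th$ is inner, for a.e.\ $\t$ the radial limit $\Th^*(e^{i\t})$ exists with $|\Th^*(e^{i\t})|=1$, and letting $r\to1^-$ in $|\Th(re^{i\t})-\Th(\rho e^{i\t})|\le\int_\rho^r|\Th'(te^{i\t})|\,dt$ yields
\[
1-|\Th(\rho e^{i\t})|\ \le\ |\Th^*(e^{i\t})-\Th(\rho e^{i\t})|\ \le\ \int_\rho^1|\Th'(te^{i\t})|\,dt\qquad(\text{a.e. }\t).
\]
For $p\ge1$, Minkowski's integral inequality gives $\big(\int_0^{2\pi}\big(\int_\rho^1|\Th'(te^{i\t})|\,dt\big)^p d\t\big)^{1/p}\lesssim\int_\rho^1 M_p(t,\Th')\,dt$, and for $\frac12<p<1$ the same bound follows from a standard argument using the sub-mean-value property of the subharmonic function $|\Th'|^p$ over dyadic sub-discs, dominating the radial maximum of $|\Th'|$ on the portion of each radius inside a dyadic ring $r_k\le|z|<r_{k+1}$ by a normalized area integral of $|\Th'|^p$ and then summing. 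Writing $\phi(t)=M_p(t,\Th')$, we conclude that the integrand of $I_1$ (and that of $I_2$) is, at the radius $\rho$, bounded above by a constant times $(1-\rho)^{-q}\big(\int_\rho^1\phi(t)\,dt\big)^q$.

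It therefore remains to establish the two weighted Hardy inequalities
\[
\int_0^1\frac{1}{(1-\rho)^q}\Big(\int_\rho^1\phi(t)\,dt\Big)^q\om(\rho)\,d\rho\ \lesssim\ \int_0^1\phi(\rho)^q\,\frac{\whw(\rho)}{1-\rho}\,d\rho,
\]
together with its analogue in which $\om(\rho)$ is replaced on the left by $\frac{\whw(\rho)}{1-\rho}$, valid for every nonnegative measurable $\phi$ on $(0,1)$; indeed, the right-hand side is $\asymp I_3\asymp\|\Th'\|^q_{A^{p,q}_\om}$. Each of these is a boundedness statement for the averaging operator $\phi\mapsto\int_\rho^1\phi$ between weighted $L^q$-spaces, and the hypotheses are exactly tailored to it: after the substitution $1-\rho\mapsto2^{-k}$ the relevant Muckenhoupt-type testing quantity collapses to a finite geometric series, which converges precisely because $\om\in\DD_q$ (this forces the right-hand inequality of \eqref{Eq:definition-R} to hold with some $\b<q$), while $\om\in\RR$ supplies the doubling of $\whw$ used throughout. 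Combining the three reductions proves all the stated equivalences. The main obstacle is the range $\frac12<p<1$ of the radial estimate, where Minkowski's inequality is unavailable and one must pass through the sub-mean-value property of $|\Th'|^p$ as indicated; the remaining Hardy-type computations are elementary but must be organized so that all implied constants depend only on $p$, $q$ and $\om$.
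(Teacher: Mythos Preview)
The paper does not give its own proof of this lemma: it is quoted from \cite{R2017b}, and the surrounding text only observes that the last two lines follow from the general identity
\(
\|f\|_{A^{p,q}_\om}^q\asymp\int_0^1 M_p^q(r,f)\,\tfrac{\whw(r)}{1-r}\,dr
\)
for $\om\in\RR$ applied with $f=\Th'$, while the replacement of $|\Th'|$ by $\tfrac{1-|\Th|}{1-|\cdot|}$ is the Schwarz--Pick result proved in \cite{R2017b}. So there is no in-paper argument to compare with; below I assess your sketch on its own.

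Your scheme---Schwarz--Pick for the trivial inequalities, then $1-|\Th(\rho e^{i\t})|\le\int_\rho^1|\Th'(te^{i\t})|\,dt$ followed by a weighted Hardy inequality for the reverse ones---is the right plan, and for $p\ge1$ it works as written. Two points need repair. First, the lemma is stated for all $0<p<\infty$, whereas you treat only $p>\tfrac12$; nothing in the argument distinguishes $p=\tfrac12$, so this restriction is spurious. Second, and more seriously, for $0<p<1$ the bound you assert,
\[
\Big(\int_0^{2\pi}\Big(\int_\rho^1|\Th'(te^{i\t})|\,dt\Big)^p d\t\Big)^{1/p}\ \lesssim\ \int_\rho^1 M_p(t,\Th')\,dt,
\]
is \emph{not} what the sub-mean-value argument produces. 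Carrying it through---dyadic rings $I_j=[s_j,s_{j+1}]$ with $1-s_j=2^{-j}(1-\rho)$, the pointwise bound $\int_{I_j}|\Th'|\,dt\le|I_j|\sup_{I_j}|\Th'|$, then $(\sum a_j)^p\le\sum a_j^p$ and the localized radial maximal estimate $\int_0^{2\pi}\sup_{t\le s_{j+1}}|\Th'(te^{i\t})|^p\,d\t\lesssim M_p^p(s_{j+1},\Th')$---gives
\[
\int_0^{2\pi}\Big(\int_\rho^1|\Th'|\,dt\Big)^p d\t\ \lesssim\ \sum_{j\ge0}(2^{-j}(1-\rho))^{p}\,M_p^p\big(s_{j+1},\Th'\big),
\]
and for $p<1$ this dyadic sum is in general \emph{larger} than $\big(\int_\rho^1 M_p\big)^p$ (indeed $(\sum b_j)^p\le\sum b_j^p$ goes the wrong way). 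The fix is to abandon the intermediate quantity $\int_\rho^1 M_p$ altogether: raise the displayed estimate to the power $q/p$, insert it into $I_1$ (or $I_2$), swap sum and integral, substitute $s=s_{j+1}$, and use $\om\in\DD_q$ (equivalently the right-hand inequality of \eqref{Eq:definition-R} with some $\b<q$) to sum the resulting geometric series $\sum_j 2^{j(\b-q)}$. This yields $I_1\lesssim\|\Th'\|_{A^{p,q}_\om}^q$ for every $0<p<\infty$. Finally, for $0<q<1$ the Hardy step (or the swap above when $q/p>1$) relies on the monotonicity of $r\mapsto M_p(r,\Th')$, which holds since $|\Th'|^p$ is subharmonic; this should be said explicitly.
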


We close this section by recalling that the counterparts of Lemmas~\ref{lemma1}~and~\ref{lemma2}
for $A_\om^p$ were originally proved in \cite{PR2016,PGRR2016}. In addition, we note that Lemma~\ref{lemma2}
and the first part of Lemma~\ref{lemma1} for $p=q$ are valid also if the hypothesis $\om\in \RR$ is replaced by $\om\in \DD$.

\section{Proof of Theorem~\ref{Theo1b}}\label{Sec3}

Let us begin by proving a modification of \cite[Lemma~4.6]{Jevtic2009}.

\begin{lemma}\label{Lemma-Jevtic}
Let $0<p<\infty$ and $0<\d<1$. Then there exists $C=C(p,\d)>0$ such that
    \begin{equation*}
    \int_\cpt \left(\log\left|\frac{1-\overline{a}z}{z-a}\right|\right)^p dA(a)\le C(1-|z|)^p, \quad z\in \D.
    \end{equation*}
\end{lemma}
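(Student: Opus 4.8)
The statement to prove is: for $0<p<\infty$ and $0<\delta<1$, there is $C=C(p,\delta)>0$ with
$$\int_{D_\delta}\left(\log\left|\frac{1-\overline{a}z}{z-a}\right|\right)^p dA(a)\le C(1-|z|)^p,\quad z\in\D.$$
The integrand is a power of the pseudohyperbolic distance's negative logarithm. First I would reduce to the regime $|z|$ close to $1$: when $|z|\le (1+\delta)/2$, say, the quantity $\log|(1-\overline{a}z)/(z-a)|$ is bounded uniformly for $a\in D_\delta$ (the denominator $z-a$ can only vanish at $a=z$, and we are integrating over a two-dimensional set, so the logarithmic singularity is integrable), while $(1-|z|)^p$ is bounded below; so the inequality is trivial there with a suitable constant. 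Hence assume $|z|\ge(1+\delta)/2$, in particular $|z|>\delta$, so that the possible singularity $a=z$ lies outside $D_\delta$ and the integrand is continuous on $D_\delta$.

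For $|z|$ near $1$, I would split $D_\delta$ into the part near the "closest point" on the circle of radius $\delta$ to $z$ and the rest. Write $\rho(a,z)=|(z-a)/(1-\overline{a}z)|\in(0,1)$, so the integrand is $(\log(1/\rho))^p$. The key elementary estimate is that for $a\in D_\delta$ and $|z|$ bounded away from $0$,
$$1-\rho(a,z)^2=\frac{(1-|a|^2)(1-|z|^2)}{|1-\overline{a}z|^2}\asymp (1-|z|)\,\frac{1-|a|^2}{|1-\overline{a}z|^2}\lesssim (1-|z|)\frac{1}{|1-\overline{a}z|},$$
using $1-|a|^2\le 1$ and bounding one factor. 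Since $\log(1/\rho)\asymp 1-\rho$ when $\rho$ is bounded away from $0$ (and $\log(1/\rho)\le -\tfrac12\log(1-\rho^2)+O(1)$ in general), it suffices to control $\int_{D_\delta}\bigl(\log^+\tfrac{1}{|1-\overline{a}z|}\bigr)^p\,dA(a)$ together with a bounded term. Changing variables and using that $a\mapsto |1-\overline a z|$ is comparable, for $a\in D_\delta$ and $|z|\to1$, to $|w-a|$ for an appropriate point $w$ near $\T$ outside $D_\delta$ at distance $\asymp 1-|z|$ from $D_\delta$, one reduces to a planar integral of the form $\int_{D_\delta}(\log^+\tfrac{1}{|w-a|})^p\,dA(a)$; polar coordinates centered at $w$ show this is bounded by a constant depending only on $p$ (and the diameter of $D_\delta$), because $\int_0^{2}(\log^+\tfrac1t)^p\,t\,dt<\infty$. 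Feeding this back, every term is $\lesssim (1-|z|)^p$, which is the claim.

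Concretely the steps are: (1) dispose of $|z|\le(1+\delta)/2$ by a crude bound on a compact set; (2) for $|z|>\delta$, use the identity for $1-\rho(a,z)^2$ and the two-sided comparison $\log(1/\rho)\asymp 1-\rho$ valid once $\rho$ stays away from $0$, and handle the at-most-finitely-bad region by an explicit integrable-singularity argument; (3) reduce to a single scalar integral $\int_{D_\delta}(\log^+\tfrac1{|1-\overline a z|})^p\,dA(a)$, bound $|1-\overline a z|\gtrsim 1-|z|$ from below and also $|1-\overline a z|$ from above by a constant, and evaluate the resulting $\log^p$-integral in polar coordinates to get a bound independent of $z$; (4) multiply through by the extracted factor $(1-|z|)^p$ and combine.

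**Main obstacle.** The delicate point is step (2)–(3): making the passage from $\log|(1-\overline a z)/(z-a)|$ to something of the form $(1-|z|)\cdot(\text{integrable kernel in }a)$ uniform in $z$ near $\T$, and in particular checking that the logarithmic factor $\log(1/\rho)$ does not blow up faster than a negative log of $1-|z|$ times a locally integrable function of $a$ — i.e. controlling the geometry of $|1-\overline a z|$ as $a$ ranges over $D_\delta$ and $z\to\T$. Once one records the inequality $-\log\rho\le -\tfrac12\log(1-\rho^2)+\log 2$ (from $\rho^2\le\rho$, or just $\rho\le\tfrac{1}{2}(1+\rho^2)+\cdots$) and the identity above, the rest is a routine polar-coordinates computation, essentially the same as in \cite[Lemma~4.6]{Jevtic2009}; the modification here is only that $D_\delta$ is a disc of radius $\delta<1$ and the estimate is required for all $z\in\D$ with the constant depending on $\delta$, which the compact-set reduction in step (1) takes care of.
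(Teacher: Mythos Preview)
Your skeleton matches the paper's proof: reduce to $|z|\ge(1+\delta)/2$ by a crude compactness bound, then exploit the identity for $1-\rho(a,z)^2$ to get a pointwise estimate on the integrand. However, the paper's execution of the second step is far shorter than yours. From
\[
\left|\frac{1-\overline a z}{z-a}\right|^2
=1+\frac{(1-|z|^2)(1-|a|^2)}{|z-a|^2}
\]
and the lower bound $|z-a|\ge|z|-|a|\ge(1-\delta)/2$ for $a\in D_\delta$, $|z|\ge(1+\delta)/2$, one gets directly
\[
\log\left|\frac{1-\overline a z}{z-a}\right|^2
\le\log\!\big(1+8(1-\delta)^{-2}(1-|z|)\big)
\le 8(1-\delta)^{-2}(1-|z|),
\]
a \emph{pointwise} bound of the right size. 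Integrating over $D_\delta$ finishes the proof in one line.

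Your steps (3)--(4), reducing to $\int_{D_\delta}(\log^+\tfrac1{|1-\overline a z|})^p\,dA(a)$ and invoking polar coordinates, are unnecessary detours: once you know $\rho$ is bounded away from $0$ (which you correctly observe), the pointwise bound $\log(1/\rho)\asymp 1-\rho^2\lesssim(1-|z|)$ is immediate and no further integration trick is needed. Also, your parenthetical claim that $\log(1/\rho)\le-\tfrac12\log(1-\rho^2)+O(1)$ holds ``in general'' is false as $\rho\to0$ (the left side blows up, the right stays bounded); fortunately you do not actually need it. In short: correct idea, same approach as the paper, but you should prune steps (3)--(4) and use the pointwise bound directly.
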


\begin{proof}
If $|z|<(1+\d)/2$, then the assertion follows by observing that there exists a constant $M=M(p)>0$ such that
    $$\int_\cpt \left(\log\left|\frac{1-\overline{a}z}{z-a}\right|\right)^p dA(a)<M<\infty.$$
Hence we may assume $|z|\ge (1+\d)/2$. Since
    $$\left|\frac{1-\overline{a}z}{z-a}\right|^2=\frac{(1-|z|^2)(1-|a|^2)}{|z-a|^2}+1,$$
we have
    $$\log\left|\frac{1-\overline{a}z}{z-a}\right|^2\le \log\left(8(1-\d)^{-2}(1-|z|)+1\right)
    \le 8(1-\d)^{-2}(1-|z|), \quad a\in \cpt.$$
Consequently, the assertion follows.
\end{proof}

For $x\in \R$ and a weight $\om$, we set $\om_x(z)=\om(z)(1-|z|)^x$ for all $z\in\D$.
If  $0<x<\infty$ and $\om \in \RR$, then
    \begin{equation}\label{Eq:thm1-5}
   \begin{split}
    \widehat{\om_x}(z) \asymp \whw(z)(1-|z|)^x, \quad z\in \D;
   \end{split}
   \end{equation}
see the proof of \cite[Corollary 7]{PGRR2016}.
It follows that
     \begin{equation}\label{Eq:definition-R-wx}
    \begin{split}
    \left(\frac{1-r}{1-s}\right)^{\a+x}\widehat{\om_x}(s)\lesssim\widehat{\om_x}(r)\lesssim \left(\frac{1-r}{1-s}\right)^{\b+x}\widehat{\om_x}(s),\quad 0\le r\le s<1,
    \end{split}
    \end{equation}
where $\a$ and $\b$ are from \eqref{Eq:definition-R}.
With these preparations we are ready to prove Theorem~\ref{Theo1b}.

\medskip

\noindent
\emph{Proof of Theorem~\ref{Theo1b}.}
Since
    \begin{equation}\label{Eq:thm1-0}
    \Th_a'(z)=\Th'(z)\frac{1-|a|^2}{(1-\overline{a}\Th(z))^2},
    \end{equation}
we obtain $|\Th'(z)|\asymp |\Th_a'(z)|$ for $z\in \D$ and $a\in D_\d$. Hence Lemma~\ref{lemma1} yields
    $$\|\Th'\|_{A_\om^{p,q}}^q \asymp \int_{D_\d}\|\Th_a'\|_{A_\om^{p,q}}^q \,dA(a)\lesssim
    \sum_n \frac{\whw(r_n)}{(1-r_n)^{q-q/p}}\int_{D_\d}\upsilon_n(a)^{q/p} dA(a);$$
and consequently, the upper bound for $\|\Th'\|_{A_\om^{p,q}}$ is proved.

Let $1\le p<\infty$ and $0<q<\infty$.
Since
    $$\log \frac{1}{|\Th_a(z)|}\ge \sum_n \frac{(1-|z|^2)(1-|z_n(a)|^2)}{|1-\overline{z}_n(a)z|^2}, \quad z\in \D,$$
by \cite[Chapter~7, Lemma~1.2]{Garnett1981}, the super-additivity of $x^p$ for $0<x\le 1$ and the Forelli-Rudin estimate \cite[Theorem~1.7]{Hedenmalm2000} give
    \begin{equation}\label{Eq:thm1-1}
    \begin{split}
    \upsilon_n(a)(1-r_n) \lesssim \int_0^{2\pi} \left(\log\frac{1}{|\Th_a(re^{i\t})|}\right)^p d\t, \quad r_n\le r<r_{n+1},
    \end{split}
    \end{equation}
as observed in \cite[Corollary~4.5]{Jevtic2009}.
Using \eqref{Eq:thm1-1} together with the hypothesis $\om\in \DD$, we obtain
    \begin{equation}\label{Eq:thm1-3}
    \begin{split}
    S&:=\sum_n \frac{\whw(r_n)}{(1-r_n)^{q-q/p}}\int_{D_\d}\upsilon_n(a)^{q/p} dA(a) \\
    &\asymp \sum_n \int_{r_n}^{r_{n+1}} \frac{\whw(r)}{(1-r)^{q+1}}\int_{D_\d}\upsilon_n(a)^{q/p}(1-r_n)^{q/p} dA(a)\,dr \\
    &\lesssim \sum_n \int_{r_n}^{r_{n+1}} \frac{\whw(r)}{(1-r)^{q+1}}
    \int_{D_\d}\left(\int_0^{2\pi} \left(\log \frac{1}{|\Th_a(re^{i\t})|}\right)^p d\t\right)^{q/p} dA(a)\,dr.
    \end{split}
    \end{equation}

If $p<q$, then \eqref{Eq:thm1-3}, Minkowski's inequality \cite[Theorem 202]{HLP:ineq}  and Lemma~\ref{Lemma-Jevtic} for $z=\Th(re^{i\t})$ yield
    \begin{equation*}
    \begin{split}
    S&\lesssim \sum_n \int_{r_n}^{r_{n+1}} \frac{\whw(r)}{(1-r)^{q+1}}
     \left(\int_0^{2\pi}\left(\int_{D_\d}\left(\log \frac{1}{|\Th_a(re^{i\t})|}\right)^q  dA(a) \right)^{p/q} d\t\right)^{q/p}dr \\
    &\lesssim \int_0^1 \left(\int_0^{2\pi} \left(\frac{1-|\Th(re^{i\t})|}{1-r}\right)^p\,d\theta\right)^{q/p} \frac{\whw(r)}{1-r}\,dr.
    \end{split}
    \end{equation*}
For $p\ge q$, we use \eqref{Eq:thm1-3}, H\"older's inequality and Lemma~\ref{Lemma-Jevtic} to obtain
    \begin{equation*}
    \begin{split}
    S&\lesssim \sum_n \int_{r_n}^{r_{n+1}} \frac{\whw(r)}{(1-r)^{q+1}}
    \left(\int_0^{2\pi}\int_{D_\d} \left(\log \frac{1}{|\Th_a(re^{i\t})|}\right)^p dA(a)\, d\t  \right)^{q/p} dr \\
    &\lesssim \int_0^1 \left(\int_0^{2\pi} \left(\frac{1-|\Th(re^{i\t})|}{1-r}\right)^p\,d\t\right)^{q/p} \frac{\whw(r)}{1-r}\,dr.
    \end{split}
    \end{equation*}
Finally the lower bound of $\|\Th'\|_{A_\om^{p,q}}$ for $1\le p<\infty$ follows from these inequalities and Lemma~\ref{lemma2}.
Thus we have shown \eqref{eq:Thm1} when $p\ge 1$.

Let $\frac12<p<1$ and $0<q<\infty$.
Put $x=q/p-q$ and assume, by the hypotheses $\om \in \RR$ and (a), $\a+x>0$ and $\b+x<q/p$.
Finally asymptotic equation \eqref{Eq:thm1-5}, \eqref{eq:Thm1} with $p$ and $q$ being replaced by
$1$ and $q/p,$ respectively, and the Schwarz-Pick lemma yield
    \begin{equation*}
    \begin{split}
    S\asymp \sum_n \widehat{\om_x}(r_n) \int_{D_\d}\upsilon_n(a)^{q/p} dA(a)
    \asymp \|\Th'\|_{A_{\om_x}^{1,q/p}}^{q/p}
    \le  \|\Th'\|_{A_\om^{p,q}}^q.
    \end{split}
    \end{equation*}
This completes the proof.  \hfill$\Box$

\medskip

Note that the proof of the lower bound
    \begin{equation*}
    \|\Th'\|_{A_\om^{p,q}}^q \gtrsim \sum_n \frac{\whw(r_n)}{(1-r_n)^{q-q/p}}\int_{D_\d}\upsilon_n(a)^{q/p} dA(a)
    \end{equation*}
relies on Lemma~\ref{lemma2}, not Lemma~\ref{lemma1}. In particular, this means that for the lower bound it suffices to assume only the hypotheses of Lemma~\ref{lemma2}.

\section{Consequences of Theorem~\ref{Theo1b}}\label{Sec4}

The first consequence of Theorem~\ref{Theo1b} asserts that
the derivative of an inner function $\Th$ belongs to $A_\om^{p,q}$ if and only if $\Th'\in A_{\om_x}^{p+xp/q, q+x}$ for every/some $0<x<\infty$.
Note that this result was originally proved in \cite{R2017c}.
The argument there relies on the existence of approximating Blaschke products \cite{Cohn1986}, unlike the proof here.

\begin{corollary}\label{coro2}
Let $\frac12<p<\infty$ and $0<q,x<\infty$. Assume that $\om$ and $\Th$ are as in Theorem~\ref{Theo1b}.
Then
    $$\|\Th'\|_{A_\om^{p,q}}^q\asymp \|\Th'\|_{A_{\om_x}^{p+xp/q,q+x}}^{q+x}.$$
\end{corollary}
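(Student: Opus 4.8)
The plan is to reduce everything to Theorem~\ref{Theo1b}, applied to both of the mixed-norm quantities appearing in the statement, and then to compare the two resulting series term by term. Set $\tilde p=p+xp/q=p(q+x)/q$ and $\tilde q=q+x$, so that the claim reads $\|\Th'\|_{A_\om^{p,q}}^q\asymp\|\Th'\|_{A_{\om_x}^{\tilde p,\tilde q}}^{\tilde q}$, and note at once the identity $\tilde q/\tilde p=q/p$, which is the source of all the cancellations below. First I would check that $(\tilde p,\tilde q,\om_x)$ again meets the hypotheses of Theorem~\ref{Theo1b}: one has $\tilde p=p(q+x)/q>p>\tfrac12$ and $\tilde q=q+x>0$; and $\om_x\in\RR$, since by \eqref{Eq:thm1-5} we have $\widehat{\om_x}(r)\asymp\whw(r)(1-r)^x$, so any one-sided estimate of \eqref{Eq:definition-R} for $\whw$ with exponent $\a$ (resp.\ $\b$) turns into the corresponding estimate for $\widehat{\om_x}$ with exponent $\a+x$ (resp.\ $\b+x$); this is exactly \eqref{Eq:definition-R-wx}.

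Next I would verify that the admissibility alternative (a)/(b) is inherited by $(\tilde p,\tilde q,\om_x)$. If $\Th$ qualifies through (b), then $\tilde p>p>1$, and $\b+x<q+x=\tilde q$ together with $\a+x>(q-q/p)+x=\tilde q-\tilde q/\tilde p$ shows that (b) holds for $(\tilde p,\tilde q,\om_x)$. If $\Th$ qualifies through (a), so $\tfrac12<p\le1$ with $\b<2q-q/p$, I would split into two subcases. If $\tilde p\le1$, then (a) holds for $(\tilde p,\tilde q,\om_x)$ because $\b+x<(2q-q/p)+x<2(q+x)-q/p=2\tilde q-\tilde q/\tilde p$. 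If $\tilde p>1$, then (b) holds: from $p\le1$ one gets $2q-q/p\le q$, hence $\b+x<q+x=\tilde q$; and the exponent $\a>0$ furnished by $\om\in\RR$ satisfies $\a>0\ge q-q/p$, hence $\a+x>\tilde q-\tilde q/\tilde p$. Thus Theorem~\ref{Theo1b} applies to $(\tilde p,\tilde q,\om_x)$ in every case.

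Finally, since the zero-sequence $\{z_n(a)\}$ of $\Th_a$, and hence $\upsilon_n(a)$, does not depend on the exponents, \eqref{eq:Thm1} applied to $(\tilde p,\tilde q,\om_x)$ gives
$$
\|\Th'\|_{A_{\om_x}^{\tilde p,\tilde q}}^{\tilde q}\asymp\sum_n\frac{\widehat{\om_x}(r_n)}{(1-r_n)^{(q-q/p)+x}}\int_{D_\d}\upsilon_n(a)^{q/p}\,dA(a),
$$
where I have used $\tilde q/\tilde p=q/p$ and $\tilde q-\tilde q/\tilde p=(q-q/p)+x$. Inserting $\widehat{\om_x}(r_n)\asymp\whw(r_n)(1-r_n)^x$ from \eqref{Eq:thm1-5} cancels the factors $(1-r_n)^x$ and identifies the right-hand side, up to constants, with the series in \eqref{eq:Thm1} for $(p,q,\om)$; comparing with \eqref{eq:Thm1} for $(p,q,\om)$ itself then yields $\|\Th'\|_{A_{\om_x}^{\tilde p,\tilde q}}^{\tilde q}\asymp\|\Th'\|_{A_\om^{p,q}}^q$. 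The only genuinely delicate point is checking that the admissibility hypotheses survive the parameter change — above all the crossover case $\tfrac12<p\le1<\tilde p$, where one is forced to pass from hypothesis (a) to hypothesis (b); everything else is routine given the identities $\tilde q/\tilde p=q/p$, $\tilde q-\tilde q/\tilde p=(q-q/p)+x$ and the weight comparison \eqref{Eq:thm1-5}.
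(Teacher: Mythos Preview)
Your proof is correct and follows essentially the same route as the paper's own argument: both apply Theorem~\ref{Theo1b} to $(p,q,\om)$ and to $(\tilde p,\tilde q,\om_x)$, use $\tilde q/\tilde p=q/p$ together with \eqref{Eq:thm1-5} to identify the two series, and then verify that the hypotheses (a)/(b) are inherited by the shifted parameters. The only difference is that the paper merely indicates the three-case split $\frac12<p'\le1$, $1<p'\le1+xp/q$, $p'>1+xp/q$ and defers the details to \cite{R2017c}, whereas you carry out the verification explicitly (your split by ``original case (a) or (b)'' and then ``$\tilde p\le1$ or $\tilde p>1$'' is the same trichotomy in different clothing, since $p'\le1+xp/q$ is equivalent to $p\le1$).
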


\begin{proof}
Let $p'=p+xp/q$ and $q'=q+x$. Then, by \eqref{Eq:thm1-5}, we have
    $$\sum_n \frac{\whw(r_n)}{(1-r_n)^{q-q/p}}\int_{D_\d}\upsilon_n(a)^{q'/p'}dA(a)
    \asymp \sum_n \frac{\widehat{\om_x}(r_n)}{(1-r_n)^{q'-q'/p'}}\int_{D_\d}\upsilon_n(a)^{q'/p'}dA(a),$$
where $\upsilon_n(a)$ is as in Theorem~\ref{Theo1b}.
Hence the assertion follows from Theorem~\ref{Theo1b} by showing that one of the following conditions holds:
    \begin{itemize}
    \item[\rm(i)] $\frac12<p'\le 1$ and $\om_x$ satisfies
    the right-hand inequality of \eqref{Eq:definition-R-wx} for some $\b+x<2q'-\frac{q'}{p'}$
    and the left-hand inequality for some $\a+x>0$,
    \item[\rm(ii)] $1<p'<\infty$, $\om_x$ satisfies
    the right-hand inequality of \eqref{Eq:definition-R-wx} for some $\b+x<q'$ and the left-hand inequality for some $\a+x>q'-\frac{q'}{p'}$.
    \end{itemize}
The validity of (i) or (ii) can be checked by considering the cases $\frac12<p'\le 1$, $1<p'\le 1+xp/q$ and $p'>1+xp/q$ separately;
see \cite{R2017c} for details. Consequently, the proof is complete.
\end{proof}

Next we turn our attention to the Besov space.
For $0<\a<\infty$ and an analytic function $f(z)=\sum_n a_n z^n$, the fractional derivative of order $\a$ is defined by
    $$D^{\a} f(z)=\sum_n (n+1)^\a a_n z^n, \quad z\in \D.$$
Note that, for  $f\in \HO(\D)$, $n\in \N$ and $0<p<\infty$, we have $M_p(r,f^{(n)}) \asymp M_p(r,D^n f)$
with comparison constants independent of $r$ \cite{Flett1972}.
For $0<p,q<\infty$ and $0\le \a<\infty$, the Besov space $B_\a^{p,q}$ consists of those $f\in \HO(\D)$ such that
    $$\|f\|_{B_\a^{p,q}}^q=\int_0^1 M_p^q(r,D^{1+\a}f)(1-r)^{q-1}\,dr<\infty.$$

\begin{corollary}\label{coroBesov}
Let $\Th$ be an inner function, $0<\d<1$ and $0<p,q,\a<\infty$ be such that $\max\{0,\frac{1}{p}-1\}<\a<\frac1p$.
Then $\Th\in B_\a^{p,q}$ if and only if
    \begin{equation}\label{Eq:lemmaC}
    \sum_n (1-r_n)^{q/p-\a q}\int_{D_\d}\upsilon_n(a)^{q/p} dA(a)<\infty,
    \end{equation}
where $\upsilon_n(a)$ as in Theorem~\ref{Theo1b}.
\end{corollary}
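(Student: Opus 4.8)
The plan is to reduce Corollary~\ref{coroBesov} to Theorem~\ref{Theo1b} by identifying the Besov space $B_\a^{p,q}$ with a suitable weighted mixed norm space. First I would record the standard identification: since $M_p(r,f^{(n)})\asymp M_p(r,D^nf)$ with constants independent of $r$ \cite{Flett1972}, and since differentiating an inner function once already captures all the relevant growth, the fractional derivative $D^{1+\a}\Th$ behaves (in the mixed norm) like $D^\a(\Th')$, which in turn is comparable to a genuine derivative applied to a primitive. The cleaner route is: apply the fractional integration/differentiation lemmas of \cite{Flett1972} to write, for $f=\Th$,
    \begin{equation*}
    \|\Th\|_{B_\a^{p,q}}^q=\int_0^1 M_p^q(r,D^{1+\a}\Th)(1-r)^{q-1}\,dr
    \asymp \int_0^1 M_p^q(r,\Th')(1-r)^{q-\a q-1}\,dr,
    \end{equation*}
the point being that ``removing'' $\a$ orders of fractional differentiation from $D^{1+\a}\Th$ to obtain $D^1\Th=\Th'$ costs exactly a factor $(1-r)^{\a q}$ in the integrand, a fact of Hardy--Littlewood type that is valid precisely in the stated range of $\a$. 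Thus $\Th\in B_\a^{p,q}$ if and only if $\Th'\in A_\nu^{p,q}$ where $\nu(r)=(1-r)^{q-\a q-1}$.

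Next I would check that this weight $\nu$ satisfies the hypotheses of Theorem~\ref{Theo1b}. We have $\nu=\om_1$-type, i.e. $\nu(r)=(1-r)^{\gamma}$ with $\gamma=q-\a q-1$, so $\widehat\nu(r)\asymp(1-r)^{\gamma+1}=(1-r)^{q-\a q}$ as long as $\gamma>-1$, i.e. $\a<1$, which holds since $\a<1/p\le$ (for $p\le1$) and more importantly the relevant constraint is $\a<1/p$; for $p>1$ one has $\a<1/p<1$ automatically. For a pure power weight the left- and right-hand inequalities of \eqref{Eq:definition-R} hold with $\a(\nu)=\b(\nu)=\gamma+1=q-\a q$. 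So in case (b), i.e. $1<p<\infty$, I must verify $q-\a q<q$ (equivalently $\a>0$, true) and $q-\a q>q-q/p$ (equivalently $\a<1/p$, true by hypothesis). In case (a), i.e. $\tfrac12<p\le1$, I need $q-\a q<2q-q/p$, equivalently $\a>1/p-1$; combined with $\gamma>-1\Leftrightarrow\a<1$ which is implied by $\a<1/p$ when $p\le1$... wait, for $p\le 1$ we have $1/p\ge1$, so $\a<1/p$ does not force $\a<1$; but $\gamma=q-\a q-1>-1$ needs $\a q<q$, i.e. $\a<1$, and here $\a<1/p$ need not give this. Hence one genuinely uses the hypothesis $\a<1/p$ together with, implicitly, the requirement that $\widehat\nu$ make sense; this is exactly why the corollary restricts to $\max\{0,1/p-1\}<\a<1/p$, and I would note that for $p\le1$ the upper bound $\a<1/p$ is what is stated and the computation of $\widehat\nu$ should be done carefully (possibly $\widehat\nu$ is still comparable to $(1-r)^{q-\a q}$ on the relevant range, or one shifts to a weight that is integrable — since $\gamma=q(1-\a)-1$ and $\a<1/p\le 2$ roughly, integrability $\gamma>-1$ amounts to $\a<1$, which does follow from $\a<1/p$ only when $p\ge1$; for $\tfrac12<p<1$ one has $1/p<2$ so $\a<1/p$ gives $\a<2$, not enough, so I suspect the intended reading uses $\a<1/p\le 1$ is false — I will simply invoke that in the stated range $\nu\in\mathcal D$ with the power computed directly).

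With the weight identified and its parameters checked against (a)/(b), I would then feed $\nu$ into Theorem~\ref{Theo1b}, whose right-hand side is
    \begin{equation*}
    \sum_n \frac{\widehat\nu(r_n)}{(1-r_n)^{q-q/p}}\int_{D_\d}\upsilon_n(a)^{q/p}\,dA(a)
    \asymp \sum_n (1-r_n)^{q-\a q}(1-r_n)^{q/p-q}\int_{D_\d}\upsilon_n(a)^{q/p}\,dA(a)
    = \sum_n (1-r_n)^{q/p-\a q}\int_{D_\d}\upsilon_n(a)^{q/p}\,dA(a),
    \end{equation*}
which is exactly \eqref{Eq:lemmaC}. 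Combining the chain $\Th\in B_\a^{p,q}\iff\Th'\in A_\nu^{p,q}\iff\eqref{Eq:lemmaC}<\infty$ finishes the proof. The main obstacle I anticipate is the first step: making the fractional-derivative-to-ordinary-derivative comparison rigorous on the nose (rather than just heuristically), i.e.\ citing or re-deriving the precise Hardy--Littlewood/Flett statement that $M_p(r,D^{1+\a}\Th)\asymp(1-r)^{-\a}M_p(r,\Th')$ holds for inner $\Th$ in the range $\max\{0,1/p-1\}<\a<1/p$, and checking that this range is exactly what is needed both for that comparison and for $\nu$ to land in the hypotheses of Theorem~\ref{Theo1b}. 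Everything after that is the bookkeeping of power weights displayed above.
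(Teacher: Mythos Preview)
Your reduction to Theorem~\ref{Theo1b} via the Flett comparison is exactly the paper's argument in the case $0<\a<1$, and your bookkeeping with the power weight $\nu(r)=(1-r)^{q-\a q-1}$ is correct there: one checks directly that the exponents $\a(\nu)=\b(\nu)=q-\a q$ fall into the range required by (a) or (b) precisely when $\max\{0,1/p-1\}<\a<1/p$, and then the right-hand side of Theorem~\ref{Theo1b} collapses to~\eqref{Eq:lemmaC} as you computed.

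The genuine gap is the case $\a\ge 1$, which you flagged but did not resolve. This case is not vacuous: whenever $0<p<1$ the hypothesis $\max\{0,1/p-1\}<\a<1/p$ permits $\a\in[1,1/p)$, and for $0<p\le 1/2$ it even forces $\a>1$. In that regime the exponent $q(1-\a)-1\le -1$, so $\nu$ is not integrable on $[0,1)$ and is therefore not a weight in the sense used throughout the paper; in particular $\widehat\nu$ is infinite and Theorem~\ref{Theo1b} cannot be invoked. Your suggestion to ``simply invoke that in the stated range $\nu\in\mathcal D$'' does not work here.

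The paper circumvents this by treating $\a\ge 1$ with a separate rescaling argument borrowed from \cite{GGJ2011}: for any $t>\a$ one has the embedding $B_\a^{p,q}\subset B_{\a/t}^{pt,qt}$ \cite[Lemma~3.4]{GGJ2011}, and conversely $\Th\in B_{\a/t}^{pt,qt}$ implies $\Th\in B_\a^{p,q}$ for inner $\Th$ in this range \cite[Corollary~3.6]{GGJ2011}. Since $\a/t<1$ and since the condition~\eqref{Eq:lemmaC} is invariant under the substitution $(p,q,\a)\mapsto(pt,qt,\a/t)$, the already-established case $0<\a<1$ (applied with parameters $pt,qt,\a/t$) finishes the proof. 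You could insert exactly this step to close the gap.

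One minor point: the pointwise assertion $M_p(r,D^{1+\a}\Th)\asymp(1-r)^{-\a}M_p(r,\Th')$ that you mention near the end is stronger than what is needed or available; Flett's theorem \cite[Theorem~6]{Flett1972} gives only the comparison of the integrated mixed norms, which is all you actually use.
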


Set $K_\d=\{z\in \C: \d\le|z|\le 1-\d\}$ for $0<\d<\frac12$, and
recall that \cite[Theorem~3.3]{GGJ2011} is a corresponding result where \eqref{Eq:lemmaC} is replaced by the condition
    $$\int_{K_\d}\left(\sum_n(1-r_n)^{q/p-\a q}\upsilon_n(a)^{q/p} \right)^{p/q}dA(a)<\infty.$$
One could say that Corollary~\ref{coroBesov} is a streamlined version of \cite[Theorem~3.3]{GGJ2011}, or
a generalization of the main result of \cite{Jevtic1990}.
Before the proof we underline that our argument for $\a\ge 1$ takes advantage of the original result.

\medskip

\noindent
\emph{Proof of Corollary~\ref{coroBesov}.}
Let $0<\a<1$. Then Theorem~\ref{Theo1b} together with \cite[Theorem~6]{Flett1972} yields
    \begin{equation*}\label{Eq:Flett}
    \begin{split}
    \|\Th\|_{B_\a^{p,q}}^q&\asymp \int_0^1 M_p^q(r,D^1 \Th)(1-r)^{(1-\a)q-1}dr
    \asymp \|\Th'\|_{A_{(1-\a)q-1}^{p,q}}^q \\
    &\asymp \sum_n (1-r_n)^{q/p-\a q}\int_{D_\d}\upsilon_n(a)^{q/p} dA(a).
    \end{split}
    \end{equation*}
Note that for the last asymptotic equation it suffices to check that $\om(z)=(1-|z|)^{(1-\a)q-1}$ satisfies the hypotheses of Theorem~\ref{Theo1b}.
This gives the assertion for $0<\a<1$.

Let $1\le\a<\infty$ and $\a<t<\infty$. By \cite[Lemma~3.4]{GGJ2011}, we know $B_{\a}^{p,q}\subset B_{\a/t}^{pt,qt}$.
Moreover, \cite[Corollary~3.6]{GGJ2011} for $t'=1/t$ implies $\Th\in B_{\a}^{p,q}$ if $\Th \in B_{\a/t}^{pt,qt}$ for $\a>\frac{1}{p}-1$.
Applying these facts together with the previous case, it is easy to verify the assertion for $\a\ge 1$. This completes the proof.
\hfill$\Box$

\section{Proof of Theorem~\ref{Theo1}}\label{Sec5}

Let us begin by stating an auxiliary result, which can be proved in a similar manner as Lemma~\ref{Lemma-Jevtic}.

\begin{lemma}\label{lemma3}
Let $0<\d<1$ and $\s$ be a probability measure supported in
$\cpt$ and satisfying
    \begin{equation}\label{Eq:l3_0}
    \sup_{z\in \D} \int_\cpt \log\left|\frac{1-\overline{a}z}{z-a}\right|\,d\s(a)=M<\infty.
    \end{equation}
Then there exists $C=C(\d,M)>0$ such that
    \begin{equation*}
    \int_\cpt \log\left|\frac{1-\overline{a}z}{z-a}\right|\,d\s(a)\le C(1-|z|), \quad z\in \D.
    \end{equation*}
\end{lemma}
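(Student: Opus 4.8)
The plan is to mimic the proof of Lemma~\ref{Lemma-Jevtic}, splitting into the case where $z$ is bounded away from the boundary and the case where $|z|$ is close to $1$. First I would fix the parameter: set $\eta=(1+\d)/2$, so that $D_\d\subset D_\eta$ and $\D\setminus D_\eta$ contains the circle $|a|=\d$ only at worst — more precisely, for $a\in\cpt=D_\d$ we have $|a|\le\d$ while $|z|\ge\eta>\d$, guaranteeing $|z-a|\ge|z|-|a|\ge\eta-\d=(1-\d)/2>0$. This uniform lower bound on $|z-a|$ is the geometric fact that makes everything work and it is exactly what was used in Lemma~\ref{Lemma-Jevtic}.

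For the easy case $|z|<\eta$: here the hypothesis \eqref{Eq:l3_0} already gives that the integral is bounded by $M$, and since $1-|z|\ge 1-\eta=(1-\d)/2>0$, we get $\int_\cpt\log|\tfrac{1-\overline a z}{z-a}|\,d\s(a)\le M\le \frac{2M}{1-\d}(1-|z|)$, which is the claimed bound with $C=2M/(1-\d)$ serving (along with what comes next) as the final constant. No analyticity or measure-theoretic subtlety is needed here beyond the normalization $\s(\cpt)=1$.

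For the main case $|z|\ge\eta$: I would use the identity $\left|\frac{1-\overline a z}{z-a}\right|^2=\frac{(1-|z|^2)(1-|a|^2)}{|z-a|^2}+1$ together with $|z-a|\ge(1-\d)/2$, $1-|a|^2\le 1$ and $1-|z|^2\le 2(1-|z|)$ to get $\left|\frac{1-\overline a z}{z-a}\right|^2\le 1+\frac{8}{(1-\d)^2}(1-|z|)$ for all $a\in\cpt$. Then, via $\log(1+t)\le t$, taking logarithms gives $2\log\left|\frac{1-\overline a z}{z-a}\right|\le \frac{8}{(1-\d)^2}(1-|z|)$ pointwise in $a$, and integrating against the probability measure $\s$ preserves this bound. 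Combining the two cases and taking $C$ to be the larger of the two constants (i.e. $C=C(\d,M)=\max\{2M/(1-\d),\,4/(1-\d)^2\}$) finishes the proof.

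I do not expect a serious obstacle here; the statement is deliberately a ``soft'' $L^1$-analogue of the $L^p$-estimate in Lemma~\ref{Lemma-Jevtic}, and the only place where hypothesis \eqref{Eq:l3_0} is genuinely used is to control the integral when $z$ is bounded away from $\T$ — there the factor $\frac{1-|a|^2}{|z-a|^2}$ need not be small, so the pointwise estimate of the main case degenerates and one must fall back on the assumed uniform bound $M$. The mild point worth stating carefully is that in the main case one really does get a pointwise-in-$a$ inequality before integrating, so no interaction between $\s$ and the singularity of $\log|\tfrac{1-\overline a z}{z-a}|$ at $a=z$ can occur, precisely because $z\notin\supp\s$ by the distance bound.
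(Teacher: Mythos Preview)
Your proposal is correct and follows exactly the route the paper intends: the paper does not write out a proof of Lemma~\ref{lemma3} but simply remarks that it ``can be proved in a similar manner as Lemma~\ref{Lemma-Jevtic},'' and your argument reproduces that scheme faithfully---splitting at $|z|=(1+\d)/2$, invoking the hypothesis \eqref{Eq:l3_0} in the inner region, and using the identity $\bigl|\tfrac{1-\overline a z}{z-a}\bigr|^2=1+\tfrac{(1-|z|^2)(1-|a|^2)}{|z-a|^2}$ together with $\log(1+t)\le t$ pointwise in $a$ in the outer region. Your observation that \eqref{Eq:l3_0} is needed precisely to handle the inner region (where the pointwise estimate degenerates) is the right way to understand the role of the hypothesis.
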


Before the proof of Theorem~\ref{Theo1}, we recall that a compact set $K\subset \D$ has a positive logarithmic (inner) capacity if there exits
a non-zero probability measure $\s$ supported in $K$ and satisfying \eqref{Eq:l3_0}.
For details, see Section 12 as well as Section 2 in \cite[Chapter~III]{Tsuji1959}.

\medskip

\noindent
\emph{Proof of Theorem~\ref{Theo1}.}
If \eqref{E1:t1} holds for some $a\in \D \setminus E_\Th$, then $\Th'\in A_\om^{p,q}$ by Lemma~\ref{lemma1} and \eqref{Eq:thm1-0}.
Consequently, condition (iii) implies (i). Moreover, since $(\D\setminus \CZ)\cap (\D\setminus E_\Th) \neq \emptyset$,
the implication $\rm(ii)\Rightarrow (iii)$ is clear. Hence it suffices to show $\rm(i)\Rightarrow (ii)$.

Assume $\Th'\in A_\om^{p,q}$ for some $p\ge 1$,
let $0<\d<1$ and $\s$ be a probability measure supported in
$\cpt$ and satisfying \eqref{Eq:l3_0}.
For condition (ii) it suffices to prove
    \begin{equation}\label{Eq:proof-thm2_1}
    I:=\int_\cpt \sum_n \frac{\whw(r_n)\upsilon_n(a)^{q/p}}{(1-r_n)^{q-q/p}} \,d\s(a)<\infty
    \end{equation}
because then
    $$\s\left(\left\{a\in \cpt:\sum_n \frac{\whw(r_n)\upsilon_n(a)^{q/p}}{(1-r_n)^{q-q/p}}=\infty  \right\}\right)=0.$$
Using \eqref{Eq:thm1-1} for $p'=1$, H\"older's inequality together with the hypothesis $q\le p$ and Lemma~\ref{lemma3} for $z=\Th(re^{i\t})$, we obtain
    \begin{equation}\label{Eq:proof-thm2}
    \begin{split}
    I&\asymp \sum_n \int_{r_n}^{r_{n+1}} \frac{\whw(r)}{(1-r)^{q+1}}\int_{\cpt}\upsilon_n(a)^{q/p}(1-r_n)^{q/p} d\s(a)\,dr \\
    &\lesssim \sum_n \int_{r_n}^{r_{n+1}} \frac{\whw(r)}{(1-r)^{q+1}}
    \int_{\cpt}\left(\int_0^{2\pi} \log \left(\frac{1}{|\Th_a(re^{i\t})|}\right) d\t\right)^{q/p} d\s(a)\,dr \\
    &\le\int_0^1 \frac{\whw(r)}{(1-r)^{q+1}}
    \left(\int_0^{2\pi}\int_{\cpt} \log \left(\frac{1}{|\Th_a(re^{i\t})|}\right)d\s(a)\, d\t\right)^{q/p} dr \\
    &\lesssim \int_0^1
    \left(\int_0^{2\pi} \frac{1-|\Th(re^{i\t})|}{1-r} \,d\t\right)^{q/p}\frac{\whw(r)}{(1-r)^{q+1-q/p}} \,dr \\
    &\le \int_0^1
    \left(\int_0^{2\pi} \frac{1-|\Th(re^{i\t})|}{1-r} \,d\t\right)^{q/p}\frac{\widehat{\om_{x}}(r)}{1-r} \,dr,
    \end{split}
    \end{equation}
where $x=q/p-q$.

Next we verify some properties for $\om_x$. By the second part of hypothesis (b) and its consequence $\om\in \DDD_{-x}$, we find $\a=\a(\om)>-x$ such that
    $$\frac{\widehat{\om_x}(s)}{(1-s)^{\a+x}}
    \lesssim \frac{\widehat{\om}(s)}{(1-s)^{\a}}
    \lesssim \frac{\widehat{\om}(r)}{(1-r)^{\a}}
    \le \frac{\widehat{\om_x}(r)}{(1-r)^{\a+x}}, \quad 0\le r\le s<1,$$
and
    $$\whw(t)(1-t)^x\lesssim 2^\a\whw\left(\frac12\right)(1-t)^{\a+x} \longrightarrow 0^+, \quad t \rightarrow 1^-.$$
Consequently, an integration by parts together with hypothesis (b) gives
    \begin{equation*}
    \begin{split}
    \left(\frac{1-r}{1-s}\right)^{\a+x}\widehat{\om_x}(s)&\lesssim \widehat{\om_x}(r)=\whw(r)(1-r)^x-x\int_r^1\frac{\whw(t)}{(1-t)^\a}(1-t)^{\a+x-1}dt \\
    &\lesssim \whw(r)(1-r)^x \lesssim \left(\frac{1-r}{1-s}\right)^{\b+x}\widehat{\om_x}(s), \quad 0\le r\le s<1,
    \end{split}
    \end{equation*}
for some $\a=\a(\om)>-x$ and $\b=\b(\om)<q$.

Finally \eqref{Eq:proof-thm2}, Lemma~\ref{lemma2} and  Corollary~\ref{coro2} for $p'=1$, $q'=q/p$ and $x'=-x$ yield
    $$I\lesssim \|\Th'\|_{A_{\om_x}^{1,q/p}}^{q/p}\asymp \|\Th'\|_{A_{\om}^{p,q}}^{q}< \infty.$$
Hence estimate \eqref{Eq:proof-thm2_1} is satisfied for $p\ge 1$. Since the case $\frac12<p<1$ can be verified
by imitating the end part of the proof of Theorem~\ref{Theo1b}, condition (i) implies (ii).
This completes the proof.  \hfill$\Box$

\medskip

It is worth noting that we can slightly weaken the hypotheses for $\om$ in Theorem~\ref{Theo1}:
Condition (a) can be replaced by the hypothesis $\om\in \DD_{2q-q/p}$, and the first part of (b) by $\om\in \DD_{q}$.
This is due to the alternative version of Lemma~\ref{lemma1} for $q\le p$, mentioned in Section~\ref{Sec2}.
More precisely, we have to first prove a modification of Corollary~\ref{coro2} for $q\le p$, and then apply this result in the proof.

\section{Proof of Theorem~\ref{thm3}}\label{Sec6}

Recall that a sequence $\{z_n\}\subset \D$ is said to be uniformly separated if
    $$
    \inf_{n\in\N}\prod_{k\ne n}\left|\frac{z_k-z_n}{1-\overline{z}_kz_n}\right|>0.
    $$
By \cite[Theorem~2.1]{Cohn1986}, for every inner function $\Th$, there exists a Blaschke product $B_\Th$ with uniformly separated zeros $\{z_n\}$ such that
$1-|\Th(z)|\asymp 1-|B_\Th(z)|$ for all $z\in \D$. $B_\Th$ is called an approximating Blaschke product of $\Th$.
Using the existence of approximating Blaschke products together with our auxiliary results, we prove the following
proposition which implies the equivalence $\rm (i)\Leftrightarrow (iv)$ in Theorem~\ref{thm3}.

\begin{proposition}\label{prop1}
Let $\frac12<p<\infty$, $\om\in \RR$ and $\Th$ be an inner function. Moreover, assume either $\frac12<p\le 1$ and $\om\in \DD_{2p-1}$, or
$1<p<\infty$ and $\om\in \DD_p \cap \DDD_{p-1}$. Then there exists $C=C(\Th)\in (0,1)$ such that
    \begin{equation*}
    \|\Th'\|_{A_\om^p}^p\asymp I_C:=\int_{\{z\in \D: |\Th(z)|<C\}} \frac{\whw(z)}{(1-|z|)^{p+1}}\,dA(z),
    \end{equation*}
where the comparison constants may depend on $p$, $\om$, $\Th$ and $C$.
\end{proposition}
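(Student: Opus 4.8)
The plan is to bootstrap from the Bergman-space counterparts of Lemmas~\ref{lemma1}~and~\ref{lemma2} (valid here since $\om\in\DD$ suffices for $p=q$) and the existence of an approximating Blaschke product $B_\Th$ with uniformly separated zeros $\{z_n\}$. First I would use $1-|\Th(z)|\asymp 1-|B_\Th(z)|$ together with the Bergman-space version of Lemma~\ref{lemma2} to replace $\|\Th'\|_{A_\om^p}^p$ by $\int_0^1 M_p^p\!\big(r,(1-|B_\Th|)/(1-r)\big)\,\whw(r)(1-r)^{-1}\,dr$, and likewise relate this to $\|B_\Th'\|_{A_\om^p}^p$; since $\{z_n\}$ is uniformly separated, hence a (finite) union of separated sequences, the Bergman-space analogue of the second part of Lemma~\ref{lemma1} gives
    \begin{equation*}
    \|\Th'\|_{A_\om^p}^p\asymp\|B_\Th'\|_{A_\om^p}^p\asymp\sum_n\frac{\whw(r_n)\upsilon_n}{(1-r_n)^{p-1}},
    \qquad \upsilon_n=\#\{j:r_n\le|z_j|<r_{n+1}\}.
    \end{equation*}
So the task reduces to showing $I_C\asymp\sum_n\whw(r_n)\upsilon_n(1-r_n)^{1-p}$ for a suitable $C=C(\Th)$.

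Second, I would analyse $I_C$ directly. Pick $C_0<1$ with $1-|\Th|\asymp 1-|B_\Th|$; then $\{|\Th|<C\}$ is, up to comparable sets, $\{|B_\Th|<C'\}$ for a nearby $C'$. Using $\whw(z)(1-|z|)^{-p-1}\asymp \int$ over dyadic annuli and the fact that $\whw\in\DD$, I would write $I_C\asymp\sum_n \whw(r_n)(1-r_n)^{-p-1}\,\mathrm{area}\big(\{z:r_n\le|z|<r_{n+1},\ |B_\Th(z)|<C'\}\big)$. The heart of the matter is the two-sided estimate
    \begin{equation*}
    \mathrm{area}\Big(\big\{z:r_n\le|z|<r_{n+1},\ |B_\Th(z)|<C'\big\}\Big)\asymp \upsilon_n\,(1-r_n)^2,
    \end{equation*}
which is exactly where uniform separation of $\{z_n\}$ is used: around each $z_j$ in the $n$-th annulus the set $\{|B_\Th|<C'\}$ contains a pseudohyperbolic disc of fixed radius (lower bound, using that $|B_\Th(z_j)|=0$ and a normal-families/Schwarz-Pick argument) and is contained in a bounded union of such discs (upper bound, using uniform separation so that the ``small'' sets attached to distinct zeros have bounded overlap, plus the standard fact that $|B_\Th(z)|\ge c$ outside a fixed pseudohyperbolic neighbourhood of the zero set for a union of separated sequences). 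Multiplying by $\whw(r_n)(1-r_n)^{-p-1}$ and using $(1-r_n)^2\cdot(1-r_n)^{-p-1}=(1-r_n)^{-(p-1)}$ gives $I_C\asymp\sum_n\whw(r_n)\upsilon_n(1-r_n)^{1-p}$, matching the expression above and completing the proof.

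**Main obstacle.** The delicate point is the area estimate, in both directions but especially the upper bound: controlling $\mathrm{area}(\{|B_\Th|<C'\}\cap\{r_n\le|z|<r_{n+1}\})$ by $\upsilon_n(1-r_n)^2$ requires knowing that $|B_\Th|$ is bounded below away from a controlled neighbourhood of its zeros, uniformly in $n$, which is a quantitative consequence of uniform separation (or of being a finite union of separated sequences) rather than a soft statement; one typically invokes the Carleson-type estimate that for such Blaschke products $\log(1/|B_\Th(z)|)\asymp\sum_n (1-|z_n|^2)(1-|z|^2)|1-\bar z_n z|^{-2}$ and that this sum is $\lesssim 1$ outside fixed Carleson boxes around the zeros. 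A secondary technical nuisance is the passage between $\{|\Th|<C\}$ and $\{|B_\Th|<C'\}$: since $1-|\Th|\asymp 1-|B_\Th|$ only up to constants, the constant $C$ for which the asymptotic holds must be chosen depending on $\Th$ (via $B_\Th$), which is precisely why the statement only claims existence of some $C\in(0,1)$ and allows the comparison constants to depend on $C$ and $\Th$; I would handle this by fixing $C$ close enough to $1$ that $\{|\Th|<C\}\subset\{|B_\Th|<C_1\}$ and $\{|B_\Th|<C_2\}\subset\{|\Th|<C\}$ for constants $C_1,C_2$ that still make the dyadic-annulus area estimates work, and note that both $\sum_n\whw(r_n)\upsilon_n(1-r_n)^{1-p}$ estimates (with $C_1$ and with $C_2$) are comparable by the first part of Lemma~\ref{lemma1} applied to $B_\Th$ and its ``thinned'' versions.
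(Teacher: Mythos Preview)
Your approach is correct and shares the same skeleton as the paper's proof: both pass through the approximating Blaschke product $B_\Th$ with uniformly separated zeros $\{z_n\}$, invoke the $p=q$ versions of Lemmas~\ref{lemma1}~and~\ref{lemma2} to obtain $\|\Th'\|_{A_\om^p}^p\asymp\|B_\Th'\|_{A_\om^p}^p\asymp\sum_n\whw(z_n)(1-|z_n|)^{1-p}$, and then link this sum to $I_C$ via small discs around the zeros. The difference is that you aim for a \emph{two-sided} dyadic area estimate
\[
\mathrm{area}\big(\{|B_\Th|<C'\}\cap\{r_n\le|z|<r_{n+1}\}\big)\asymp\upsilon_n(1-r_n)^2,
\]
whose upper bound---your declared ``main obstacle''---requires the nontrivial fact that $|B_\Th|$ is bounded below off a fixed pseudohyperbolic neighbourhood of its zeros. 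The paper sidesteps this obstacle entirely: the inequality $I_C\lesssim\|\Th'\|_{A_\om^p}^p$ is immediate from Lemma~\ref{lemma2} for \emph{any} $0<C<1$, since on $\{|\Th|<C\}$ one has $(1-|\Th(z)|)^p\ge(1-C)^p$ and therefore
\[
\|\Th'\|_{A_\om^p}^p\asymp\int_\D\Big(\frac{1-|\Th(z)|}{1-|z|}\Big)^p\frac{\whw(z)}{1-|z|}\,dA(z)\ge(1-C)^p I_C.
\]
For the reverse inequality $\|\Th'\|_{A_\om^p}^p\lesssim I_C$ the paper needs only the \emph{lower} area bound, namely the inclusion $\bigcup_n\Delta(z_n)\subset\{|B_\Th|<D\}\subset\{|\Th|<C\}$ for suitable $D,C\in(0,1)$, which follows from $|B_\Th(z)|\le|z_n-z|/(1-|z_n|)<\d$ on $\Delta(z_n)$ and the comparability $1-|\Th|\asymp 1-|B_\Th|$. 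So your plan works, but it does more than necessary; the paper's route is shorter and dodges precisely the step you flagged as delicate.
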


\begin{proof}
For any $0<C<1$, Lemma~\ref{lemma2} yields
    \begin{equation*}
    \begin{split}
    \|\Th'\|_{A_\om^p}^p&\asymp \int_{\D} \left(\frac{1-|\Th(z)|}{1-|z|}\right)^p \frac{\whw(z)}{1-|z|}\,dA(z)\ge (1-C)^pI_C.
    \end{split}
    \end{equation*}
Hence the lower bound for $\|\Th'\|_{A_\om^p}$ is proved.

Let $B_\Th$ be an approximating Blaschke product of $\Th$ with zeros $\{z_n\}$.
Since $\{z_n\}$ is (uniformly) separated, we find $0<\d<1$ such that
discs $\Delta(z_n)=\{z:|z_n-z|<\d(1-|z_n|)\}$ are pairwise disjoint.
Hence, using Lemma~\ref{lemma2} and \cite[Theorem~1]{PGRR2016}
together with the hypotheses for $\om$, we obtain
    \begin{equation*}
    \begin{split}
    \|\Th'\|_{A_\om^p}^p&\asymp \int_{\D} \left(\frac{1-|\Th(z)|}{1-|z|}\right)^p \om(z)\,dA(z)
    \asymp \int_{\D} \left(\frac{1-|B_\Th(z)|}{1-|z|}\right)^p \om(z)\,dA(z) \\
    &\asymp \|B_\Th'\|_{A_\om^p}^p \asymp \sum_n \frac{\whw(z_n)}{(1-|z_n|)^{p-1}}
    \asymp \sum_n \int_{\Delta(z_n)}\,dA(z) \frac{\whw(z_n)}{(1-|z_n|)^{p+1}}\\
    &\asymp \sum_n \int_{\Delta(z_n)} \frac{\whw(z)}{(1-|z|)^{p+1}}\,dA(z)
    = \int_{\bigcup_n \Delta(z_n)} \frac{\whw(z)}{(1-|z|)^{p+1}}\,dA(z).
    %\asymp \sum_n \int_{\Delta(z_n)} \frac{\om(z)}{(1-|z|)^{p}}\,dA(z).
    \end{split}
    \end{equation*}
Consequently, it suffices to find constants $C$ and $D$ such that $0<C,D<1$ and
    $$\bigcup_n \Delta(z_n)\subset \{z\in \D: |B_\Th(z)|<D\} \subset \{z\in \D: |\Th(z)|<C\}.$$
Since
    $$|B_\Th(z)|\le \frac{|z_n-z|}{|1-\overline{z}_n z|}\le \frac{|z_n-z|}{1-|z_n|}<\d, \quad  z\in \Delta(z_n),$$
the first inclusion is valid for $D=\d$. If $|B_\Th(z)|<D$ for some $0<D<1$, then we find $M=M(\Th)<\frac{1}{1-D}$ such that
    $$|\Th(z)|\le 1-M(1-|B_\Th(z)|)< 1-M(1-D), \quad z\in \D.$$
Thus the second inclusion is proved and the assertion follows.
\end{proof}

Recall that the Carleson curve $\Gamma_\e \subset \overline{\D}$ associated with $0<\e<1$ and an inner function $\Th$
has the following properties \cite{Cohn1983,Cohn1986, Garnett1981}:
    \begin{itemize}
    \item[\rm(1)] There exists $\e_0=\e_0(\e)\in (0,\e)$ such that $\e_0<|\Th(z)|<\e$ for $z\in \Gamma_\e\cap \D$.
    \item[\rm(2)] $\Gamma_\e\cap \D$ is a countable union of arcs $I_n$ with pairwise disjoint interiors such that
    \begin{itemize}
    \item[\rm $\bullet$] each $I_n$ is either a radial segment or part of a circle;
    \item[\rm $\bullet$] the end points $a_n$ and $b_n$ of $I_n$ satisfy
        $$\d_1\le \left|\frac{a_n-b_n}{1-\overline{a}_nb_n}\right|\le\d_2$$
    for all $n$ and some fixed $\d_1,\d_2\in (0,1)$.
    \end{itemize}
    \item[\rm(3)] If $\{z_n\}$ is the sequence of the middle points of $I_n$, then the Blaschke product $B_\Th$ with zeros $\{z_n\}$
    is an approximating Blaschke product of $\Th$.
    \end{itemize}
Now we are ready to prove Theorem~\ref{thm3}.

\medskip

\noindent
\emph{Proof of Theorem~\ref{thm3}.}
By Proposition~\ref{prop1}, the equivalence $\rm (i)\Leftrightarrow (iv)$ is valid.
Assume without loss of generality that $\{z_n(a)\}$ is ordered by increasing moduli, and
enumerate it such that, for all $k$, $r_j\le |\zjk(a)|<r_{j+1}$, $j=0,1,\ldots$, and $\{z_{jk}(a)\}$ is ordered by increasing moduli with $k$.
Then the hypothesis $\om \in \DD$ yields
    \begin{equation*}
    \begin{split}
    \sum_{j} \frac{\whw(r_j)\upsilon_j(a)}{(1-r_j)^{p-1}}\asymp \sum_j \sum_k \frac{\whw(z_{jk}(a))}{(1-|z_{jk}(a)|)^{p-1}}
    = \sum_n \frac{\whw(z_n(a))}{(1-|z_n(a)|)^{p-1}}, \quad a\in \D,
    \end{split}
    \end{equation*}
where $\upsilon_j(a)$ is as in Theorem~\ref{Theo1b}.
Consequently, $\rm (i)\Leftrightarrow (ii) \Leftrightarrow (iii)$ by Theorem~\ref{Theo1}.
Hence it suffices to prove $\rm (i)\Leftrightarrow (v)$.

By the hypotheses of $\om$, we know that $\whw(r)/(1-r)^p$ is essentially increasing with $r$.
Using this fact and condition (2) of $\Gamma_\e$, we obtain
    \begin{equation*}
    \begin{split}
    \int_{\Gamma_\e} \frac{\whw(z)}{(1-|z|)^p}\,|dz|=\sum_n \int_{I_n} \frac{\whw(z)}{(1-|z|)^p}\,|dz|
    \lesssim \sum_n |I_n| \frac{\whw(\xi_n)}{(1-|\xi_n|)^p},
    \end{split}
    \end{equation*}
where $\xi_n$ is the supremum of $I_n$ in the sense of absolute value.
Since, for all $n$,
    $$|I_n|\asymp |a_n-b_n| \asymp 1-|z_n| \quad \text{and} \quad \whw(\xi_n) \asymp \whw(z_n)$$
by condition (2) and the hypothesis $\om \in \DD$, we obtain
    $$\int_{\Gamma_\e} \frac{\whw(z)}{(1-|z|)^p}\,|dz| \lesssim \sum_n \frac{\whw(z_n)}{(1-|z_n|)^{p-1}}.$$
In a similar manner, one can also verify the asymptotic equation $\gtrsim$. Consequently,
condition (3) together with Lemmas~\ref{lemma1}~and~\ref{lemma2} yields
    \begin{equation*}
    \begin{split}
    \int_{\Gamma_\e} \frac{\whw(z)}{(1-|z|)^p}\,|dz|\asymp  \sum_n \frac{\whw(z_n)}{(1-|z_n|)^{p-1}}
    \asymp \|B_\Th'\|_{A_\om^p}^p \asymp \|\Th'\|_{A_\om^p}^p.
    \end{split}
    \end{equation*}
This means that $\rm (i)\Leftrightarrow (v)$ and the proof is complete. \hfill$\Box$

\section{Proof of Corollary~\ref{coro:Hp}, example and remarks}\label{Sec7}

Let us begin with the proof of Corollary~\ref{coro:Hp}.

\medskip

\noindent
\emph{Proof of Corollary~\ref{coro:Hp}.} Let $B_\Th$ be an approximation Blaschke product of $\Th$ with zeros $\{z_n\}$. Using this fact together with \cite[Theorem~2]{PR2016}, \cite[Theorem~1]{Cohn1983},
Lemmas~\ref{lemma1}~and~\ref{lemma2}, we obtain
    \begin{equation*}
    \begin{split}
    \|\Th'\|_{H^p}^p&\asymp \sup_{0\le r<1} \int_0^{2\pi} \left(\frac{1-|\Th(re^{i\t})|}{1-r}\right)^p d\t
    \asymp \sup_{0\le r<1} \int_0^{2\pi} \left(\frac{1-|B_\Th(re^{i\t})|}{1-r}\right)^p d\t \\
    &\asymp \|B_\Th'\|_{H^p}^p\asymp \sum_n (1-|z_n|)^{1-p} \asymp \|B_\Th'\|_{A_\a^{p+\a+1}}^{p+\a+1} \asymp \|\Th'\|_{A_\a^{p+\a+1}}^{p+\a+1}
    \end{split}
    \end{equation*}
for every/some $-1<\a<\infty$. Hence the equivalences $\rm (i)\Leftrightarrow (ii) \Leftrightarrow (iii)$ are valid.
Moreover, Theorem~\ref{thm3} gives $\rm (iii)\Leftrightarrow (iv) \Leftrightarrow (v) \Leftrightarrow (vi) \Leftrightarrow (vii)$.
This completes the proof.

\medskip

Next we give a concrete example in which we use Theorem~\ref{thm3}.

\begin{example}
Let us consider the atomic singular inner function
    $$S(z)=\exp\left(\frac{z+1}{z-1}\right), \quad z\in \D.$$
Let $\{z_n(a)\}$ be the zero-sequence of the Frostman shift $S_a$ of $S$, assume $a\in \D\setminus \{0\}$ and set $-\pi<\arg a\le \pi$. Solving the equation
    $$S(z)=\exp\left(\frac{z+1}{z-1}\right)=a,$$
we can present zeros $z_n(a)$ in the form
    $$z_n(a)=\frac{c_n+1}{c_n-1}, \quad \text{where} \quad c_n=\log |a|+i(2\pi n + \arg a),$$
for $n\in \Z$. It follows that
    \begin{equation*}
    \begin{split}
    1-|z_n(a)|^2&=\frac{|c_n-1|^2-|c_n+1|^2}{|c_n-1|^2}= \frac{-4\Real c_n}{|c_n-1|^2} \\
    &=\frac{-4\log |a|}{|c_n-1|^2} \asymp |n|^{-2}, \quad |n| \rightarrow \infty.
    \end{split}
    \end{equation*}
In particular, for $\a \in \R$,
    \begin{equation}\label{Eq:S-zeros}
    \begin{split}
    \sum_{n} (1-|z_n(a)|)^\a<\infty \quad \text{if and only if} \quad \a>\frac12.
    \end{split}
    \end{equation}
Hence, as a consequence of Theorem~\ref{thm3} and the nesting property $A_{\a_1}^p \subset A_{\a_2}^p$ for $-1<\a_1\le \a_2<\infty$, we obtain the following result: For $\frac12<p<\infty$ and $-1<\a<\infty$,
the derivative of $S$ belongs to $A_\a^p$ if and only if $\a>p-\frac32$.
This result originates to \cite{Jevtic1979}; see also \cite{MP1982}.
However, the argument used here is essentially different from that used in these references.

By \cite[Example~2]{GI1991}, the Frostman shift $S_a$ for any $a\in \D\setminus \{0\}$ is a Blaschke product with uniformly separated zeros. Applying this fact together with \eqref{Eq:S-zeros}, the above-mentioned result follows also from Lemma~\ref{lemma1}.
\end{example}

We close this note with the following remarks, which indicate two open questions.

\begin{itemize}
    \item[\rm(I)]
A modification of Corollary~\ref{coroBesov} for $p\ge q$ can be obtained in a similar manner as the current version
using Theorem~\ref{Theo1} instead of Theorem~\ref{Theo1b}. More precisely, the counterpart of \eqref{Eq:lemmaC}
takes the form
    \begin{equation}\label{Eq:lemmaC2}
    \sum_n (1-r_n)^{q/p-\a q}\upsilon_n(a)^{q/p}<\infty,
    \end{equation}
where $a\in \D\setminus E_\Th$. In addition, if $\Th$ belongs to $B_\a^{p,q}$ with the given restrictions, then there exists a set $\CZ \subset\D$ of logarithmic capacity zero
such that \eqref{Eq:lemmaC2} holds for every $a\in \D\setminus \CZ$.

Applying the above-mentioned result together with Corollary~\ref{coro:Hp}, one can show that, for $\frac12<p<\infty$, the derivative of an inner function $\Th$
belongs to $H^p$ if and only if $\Th''\in A_{p-1}^p$. Note that for $p\ge 1$ we are working with finite Blaschke products.
Originally this result was stated as a part of \cite[Theorem~3.10]{GGJ2011}.
The existence of the corresponding result in the case $0<p\le \frac12$ is an open question. However, since
    $$\{f:f'\in A_{p-1}^p\}\subset H^p, \quad 0<p\le 2,$$
by \cite[Lemma~1.4]{Vinogradov}, another implication is trivially valid also for $0<p\le \frac12$.
    \item[\rm(II)]
Corollary~\ref{coro:Hp} contains several ways to characterize those inner functions $\Th$
whose derivative belongs to $H^p$ for some $\frac12<p<1$. Nevertheless, it does not contain
an important characterization given in \cite[Theorem~1]{Grohn2016}:
For $\frac12<p<1$ and $1<\eta<\infty$, the derivative of an inner function $\Th$ belongs to $H^p$
if and only if $\Th$ is a Blaschke product whose zero-sequence $\{z_n\}$ satisfies the condition
        $$\int_0^{2\pi} \left(\sum_{z_n\in \Gamma_{\eta}(e^{i\t})} \frac{1}{1-|z_n|}\right)^p d\t<\infty,$$
where
    $$
    \Gamma_\eta(e^{i\t})=\{z\in\D:|z-e^{i\t}|\le\eta(1-|z|)\}.
    $$

For instance using Corollary~\ref{coro:Hp}, we may replace $H^p$ in the above-mentioned result by $A_\a^{p+\a+1}$, where $-1<\a<\infty$.
Even so any corresponding result for general $A_\a^p$ has not been verified, and proving such result seems to be laborious.
A reason for this is the fact that the argument of \cite[Theorem~1]{Grohn2016} utilizes the well-known identity
    $$\|B'\|_{H^p}^p= \frac{1}{2\pi}\int_0^{2\pi} \left(\sum_n \frac{1-|z_n|^2}{|z_n-e^{i\t}|^2}\right)^p\,d\t, \quad 0<p<\infty,$$
where $B$ is the Blaschke product with zeros $\{z_n\}$ \cite{AhernClark1974}; and we do not have a similar result for $A_\a^p$.
\end{itemize}

\medskip

\noindent
\textbf{Acknowledgements.}
The authors thank Janne Gr\"ohn for valuable comments, and the referees for careful reading of the manuscript.


\begin{thebibliography}{99}

\bibitem{Ahern1979}         P.~Ahern,
                           The mean modulus and the derivative of an inner function,
                             Indiana Univ.~Math.~J.~28 (1979), no.~2, 311--347.

\bibitem{AhernClark1974}    P.~R.~Ahern and D.~N.~Clark,
                            On inner functions with $H^p$ derivative,
                            Michigan Math.~J.~21 (1974), 115--127.

\bibitem{Cohn1983}          W.~S.~Cohn,
                            On the $H^p$ classes of derivatives of functions orthogonal to invariant subspaces,
                            Michigan Math.~J.~30 (1983), no.~2, 221--229.

\bibitem{Cohn1986}          W.~S.~Cohn,
                            Radial limits and star invariant subspaces of bounded mean oscillation,
                            Amer.~J.~Math. 108 (1986), no.~3, 719--749.

\bibitem{Colwell1985}       P.~Colwell,
                            Blaschke products: Bounded Analytic Functions,
                            University of Michigan Press, Ann Arbor/Michigan, 1985.

\bibitem{Duren1970}         P.~Duren, Theory of $H^p$ Spaces,
                            Academic Press, New York-London 1970.

\bibitem{Flett1972}
                            T.~M.~Flett,
                            The dual of an inequality of Hardy and Littlewood and some related inequalities,
                            J.~Math.~Anal.~Appl.~38 (1972), 746--765.

\bibitem{Garnett1981}       J.~Garnett, Bounded analytic functions. Revised 1st edition, Springer, New York, 2007.

\bibitem{GGJ2011}           D.~Girela, C.~Gonz\'alez and M.~Jevti\'c,
                            Inner functions in Lipschitz, Besov, and Sobolev spaces,
                            Abstr.~Appl.~Anal.~2011, Art.~ID 626254, 26 pp.

\bibitem{GI1991}            P.~Gorkin and K.~Izuchi,
                            Some counterexamples in subalgebras of  $L^\infty(\D)$,
                            Indiana Univ.~Math.~J.~40 (1991), no.~4, 1301--1313.

\bibitem{Grohn2016}         J.~Gr\"ohn and A.~ Nicolau,
                            Inner Functions in certain Hardy-Sobolev Spaces,
                            J.~Funct.~Anal.~272 (2017), no.~6, 2463--2486.

\bibitem{HLP:ineq}          G.~H.~Hardy, J.~E.~Littlewood and G.~P\'olya,
                            Inequalities,
                            Cambridge Univ.~Press.~1934.

\bibitem{Hedenmalm2000}     H.~Hedenmalm, B.~Korenblum and K.~Zhu,
                            Theory of Bergman Spaces, Graduate Texts in Mathematics,
                            vol.~199, Springer, New York, 2000.

\bibitem{Jevtic2009}        M.~Jevti\'c,
                            Blaschke products in Lipschitz spaces,
                            Proc.~Edinb.~Math.~Soc. (2) 52 (2009), no.~3, 689--705.

\bibitem{Jevtic1990}        M.~Jevti\'c,
                            On Blaschke products in Besov spaces,
                            J.~Math.~Anal.~Appl.~149 (1990), no.~1, 86--95.

\bibitem{Jevtic1979}        M.~Jevti\'c,
                            Singular inner functions with derivative in $A^{p,\a}$,
                            Mat.~Vesnik 3(16)(31) (1979), no.~4, 413--419.

\bibitem{Mashreghi2013}     J.~Mashreghi, Derivatives of inner functions,
                            Fields Institute Monographs 31. Springer, New York; Fields Institute for Research in Mathematical Sciences, Toronto, ON, 2013.

\bibitem{MP1982}            M.~Mateljevi\'c and M.~Pavlovi\'c,
                            On the integral means of derivatives of the atomic function,
                             Proc.~Amer.~Math.~Soc.~86 (1982), no.~3, 455--458.

\bibitem{Pelaez2008}        J.~A.~Pel\'aez,
                            Inner functions as improving multipliers,
                            J.~Funct.~Anal.~255 (2008), no.~6, 1403--1418.

\bibitem{PelRat2015}        J.~A.~ Pel\'aez and J. R\"atty\"a,
                            Embedding theorems for Bergman spaces via harmonic analysis,
                            Math.~Ann.~362 (2015), no.~1--2, 205--239.

\bibitem{TwoweightII}       J. A. Pel\'aez and J. R\"atty\"a,
                            Weighted Bergman projections on $L^\infty$, preprint.

\bibitem{PR2016}            F. P\'erez-Gonz\`alez and J. R\"atty\"a,
                            Derivatives of inner functions in weighted Bergman spaces and the Schwarz-Pick lemma,
                           Proc.~Amer.~Math.~Soc.~145 (2017), no.~5, 2155--2166.

\bibitem{PGRR2016}          F. P\'erez-Gonz\'alez, J. R\"atty\"a and A.~Reijonen,
                            Derivatives of inner functions in Bergman spaces induced by doubling weights,
                            Ann.~Acad.~Sci.~Fenn.~Math.~42 (2017), no.~2, 735--753.

\bibitem{R2017c}            A.~Reijonen,
                            Derivatives of Blaschke products in weighted mixed norm spaces, preprint.

\bibitem{R2017b}            A.~Reijonen,
                            Derivatives of inner functions in weighted mixed norm spaces, J.~Geom.~Anal.~(2018), \\
                            electronic version available at \text{https://doi.org/10.1007/s12220-018-0065-8}

\bibitem{Tsuji1959}          M.~Tsuji,
                            Potential theory in modern function theory,
                            Reprinting of the 1959 original, Chelsea Publishing Co., New York, 1975.


\bibitem{Verbitsky1984}     I.~\`E.~Verbitski\v \i,
                            Inner functions, Besov spaces and multipliers,
                            Dokl.~Akad.~Nauk SSSR 276 (1984), no.~1, 11--14.

\bibitem{Vinogradov}        S.~A.~Vinogradov,
                            Multiplication and division in the space of analytic functions with an area-integrable derivative, and in some related spaces (Russian), Zap.~Nauchn.~Sem.~S.-Peterburg.~Otdel. Mat. Inst. Steklov. (POMI) 222 (1995), Issled. po Linein. Oper. i Teor. Funktsii 23, 45--77; English J. Math. Sci. (New York) 87 (1997), 3806--3827.

\end{thebibliography}
\end{document}